\newtheorem{theorem}{Theorem}[section]
\newtheorem{corollary}{Corollary}
\theoremstyle{definition}
\newtheorem{remark}{Remark}
\title[blow-up in delay model ]
{Finite time Blow up in a population model with competitive interference and time delay}
\author[  Parshad, Bhowmick, Quansah, Agrawal, Upadhyay. ]{}
\subjclass{Primary: 35B36, 37C75, 60H35; Secondary: 92D25, 92D40}
 \keywords{Delay differential equation model, Beddington-DeAngelis functional response, Finite time blow-up}
 \email{ranjit.chaos@gmail.com}
\begin{document}
\maketitle

\centerline{\scshape 
 Rana D. Parshad, Suman Bhowmick and Emmanuel Quansah}
{
\footnotesize
 \centerline{Department of Mathematics,}
 \centerline{Clarkson University,}
   \centerline{ Potsdam, New York 13699, USA.}
 }

   \medskip
\centerline{\scshape Rashmi Agrawal and Ranjit Kumar Upadhyay }
{\footnotesize
 \centerline{Department of Applied Mathematics,}
 \centerline{Indian School of Mines,}
   \centerline{ Dhanbad 826004, Jharkhand, India.}
}

\begin{abstract}
In the current manuscript, an attempt has been made to understand the dynamics of a time-delayed predator-prey system with modified Leslie-Gower and Beddington-DeAngelis type functional responses for large initial data. In \cite{RK15}, we have seen that the model does possess globally bounded solutions, for small initial conditions, under certain parametric restrictions. Here, we show that actually solutions to this model system can blow-up in finite time, for large initial condition, \emph{even} under the parametric restrictions derived in \cite{RK15}. We prove blow-up in the delayed model, as well as the non delayed model, providing sufficient conditions on the largeness of data, required for finite time blow-up. Numerical simulations show, that actually the initial data does not have to be very large, to induce blow-up. The spatially explicit system is seen to possess Turing instability. We have also studied Hopf-bifurcation direction in the spatial system, as well as stability of the spatial Hopf-bifurcation using the central manifold theorem and normal form theory. 
\end{abstract}

\section{Introduction}

We live in a finite world which can only sustain a finite number (density) of individuals. Any realistic ecological model should be dissipative, i.e., populations densities are eventually bounded by a constant independent of initial conditions. May (\cite{RM76},\cite{RM82}) suggested that in an orgy of reciprocal benefication, Lotka-Volterra (LV) type models with sufficiently strong mutualistic interactions, populations can blow-up in finite time despite being self-limited. Lin (\cite{Lin02}) obtained the lower and upper bounds on the blow up rate for a  mutualistic LV model. Zhou and Lin \cite{ZL12} studied the finite time blow-up and global existence of a nonlocal problem in space with a free boundary condition. Their results show that the blow up occurs for sufficiently large values of initial conditions while the global fast solution exists for sufficiently small initial data, and the intermediate case with suitably large initial data yields a global slow solution. Ninomiya and Weinberger \cite{NH03} showed that in a simple two species predator-prey like model, populations always converges to a unique equilibrium state, but if the death rate of the predator is increased ever so slightly, then there are initial conditions for which both populations blow-up in finite time. Stuart and Floater \cite{AM90} studied the blow up problem in detail for nonlinear scalar ODE and showed that fixed step method fail to compute the blow up time. Based on arc length transformation, Hirota and Ozawa \cite{CK06} presented a numerical method for estimating the blow-up time of the solution of ODE. Both (\cite{AM90},\cite{CK06}) studied blow up problems in PDE by discretizing  the equations in space, and integrating the resulting ODE system  by a method of lines approach. Fila et al. \cite{MH06} discuss the blow up results of the scalar ODE and suggested that Dirichlet boundary condition can in fact prevent blow up in reaction-diffusion systems.
Parshad et al. in a series of works \cite{RD13, PKK15} investigated a three species food chain model with generalist top predator and observed that the model exhibits finite time blow up in certain parameter range and for large initial data.  \\  
\\
    Upadhyay and Iyengar \cite{RK13} considered the case of an environment where there is a prey and a generalist predator. A two species system with a modified Leslie-Gower  and Beddington-DeAngelis type functional responses is proposed via the following ODE model that describes the interactions between a generalist predator $Y$ and its most favorite food, 
	a prey species $X$.
\begin{eqnarray}
\label{eq:1}
\frac{dX}{dt}&=&rX(1-\frac{X}{K})-\frac{\omega XY}{D+dX+Y}\\
\frac{dY}{dt}&=&cY^2-\frac{\omega_{1}Y^2}{X+D_{1}} \nonumber
\end{eqnarray}

It is assumed that the prey grows logistically in the absence of predator with per capita rate $r$, and carrying capacity $K$, $\omega$ is the maximum rate of per capita removal of prey species $X$, due to predation by a generalist predator $Y$. $d$ and $D$ quantify the extent to which environment provides protection to the prey $X$, and may be thought of as a refuge or a measure of the effectiveness of the prey, in evading the predators attack. $c$ is the growth rate of the generalist predator $Y$, due to sexual reproduction, and $Y^2$ signifies the fact that mating frequency is directly proportional to the number of males as well as female individuals. $\omega_{1}$ is the maximum rate of per capita removal of predator species $Y$, and $D_{1}$ normalizes the residual reduction in the predator population because of severe scarcity of its favorite food $X$.
It is well known that the reproduction of a predator population after depredating on the prey will not be instantaneous, but mediated by some constant time lag $\tau> 0$ for gestation of predator, i.e., $\tau$ represents the time lag required for gestation of predator which is based on the assumption that the rate of change in predator population depends on the number of prey and of predator at some previous time. Based on the above, recently Upadhyay and Agrawal \cite{RK15}  propose that the dynamics of the system \eqref{eq:1} are governed by the following system of nonlinear delay differential equations:

\begin{eqnarray}
\label{eq:1d}
\frac{dX}{dt}&=&rX(1-\frac{X}{K})-\frac{\omega XY}{D+dX+Y} \nonumber \\
\frac{dY}{dt}&=& Y\left(cY-\frac{\omega_{1}Y(t-\tau)}{X(t-\tau)+D_{1}}\right)
\end{eqnarray}

The initial conditions associated with \eqref{eq:1d} are 

\begin{eqnarray}
X(\theta)=\phi_{1}(\theta) \geq 0, Y(\theta)= \phi_{2}(\theta) \geq 0, \theta \in [-\tau, 0],
\end{eqnarray}
such that $\phi_{i}(0) >0 (i=1,2)$,
where $\phi : [-\tau,0] \longrightarrow \mathbb{R}^{2}$ with norm 

\begin{eqnarray}
||\phi|| = sup_{-\tau \leq \theta \leq 0} (|\phi_{1}(\theta)|,|\phi_{2}(\theta)|) 
\end{eqnarray}
such that $\phi=(\phi_{1},\phi_{2})$.

In this work, the authors \cite{RK15} prove results on positive invariance, boundedness, existence of equilibria, direction and stability of Hopf-bifurcation. They also derived conditions for global stability using a suitable Lyapunov function and suggest how finite time blow-up can be prevented in such a system. Numerically, they have shown the global stability  of the interior equilibrium  point (c.f. Fig 3) for a given set of biologically realist parameter values. This global stability result is valid for a small range of initial conditions up to $(13.5, 13.5)$. After that a slight perturbation in the initial condition for example to $(13.6, 13.6)$, invalidates the global stability result. In the current manuscript, we explore the other side of the story. That is the solutions of the model system \eqref{eq:1d} can blow-up in finite time, if the initial data is sufficiently large, even under the parameter restrictions imposed in \cite{RK15} for global stability result.  Here we will prove the finite time blow up results for the delay and non-delay model system both analytically and numerically. We also study the spatial counterpart of this delay model \eqref{eq:1d}. For the delayed diffusive model \eqref{eq:3.1}-\eqref{eq:3.1a}, we will study the local stability, existence of Hopf-bifurcation and stability and direction of periodic solutions using the normal form and center manifold theorem. Recently, Xu and Yuan \cite{XY15} studied the spatial periodic solutions in a delay diffusive predator-prey model with herd behaviour. Li and Wang \cite{LW15} found that the critical value of time delay affects the stability of the positive constant equilibrium and the condition of occurrence of Hopf-bifurcation is stronger due to the emergence of diffusion in a delayed diffusive models.

\section{Finite time blow-up in the delayed model system}

\label{thm:u1}




In the current manuscript, we show the following
\begin{itemize}
\item \emph{There is no global stability} for model system \eqref{eq:1d}. In fact solutions to system \eqref{eq:1d} can blow-up in finite time if the initial data is sufficiently large, even under the parametric restrictions imposed in Theorem 5  \cite{RK15}. This is shown via Theorem \ref{thm:t1}.

\item Solutions to the non delayed model \eqref{eq:1} can also blow up in finite time, if the initial data is sufficiently large. This is shown via Theorem \ref{thm:t2}.

\item Numerical simulations confirm the above. In particular we show that in reality, the initial data does not have to be to large to induce blow-up. This is shown in Figure  \ref{fig:1t}. 

\item Interestingly, the delayed model \eqref{eq:1d} can be shown to blow-up in finite time, even if the $cY^2$ term in the predator equation in \eqref{eq:1d} is replaced by $cY^m$, where $1 < m < 2$. This is \emph{not} possible in the non delayed model \eqref{eq:1}. This makes these two models completely different in this respect. This is shown via Corollary \ref{cor:t11}, and a numerical simulation is shown to confirm this via Figure \ref{fig:1t1}.

\item The spatially explicit model system \eqref{eq:1di}-\eqref{eq:1di2} possesses Turing instability. This is shown in Theorem \ref{thm:tur} and via Figs. \ref{fig:1tt} - \ref{fig:2t}. 

\item The stability and direction of Hopf bifurcation in the spatial delayed system \eqref{eq:1d} is presented via Theorem \ref{thm:hop}.
\end{itemize}

We first state and prove the finite time blow-up result for the time delay model,

\begin{theorem}
\label{thm:t1}
Consider the delayed system \eqref{eq:1d} in which the interior equilibrium $E_{2}$ of the system \eqref{eq:1d} is locally asymptotically stable. For sufficiently large initial data,  interior equilibrium $E_{2}$ of the system \eqref{eq:1d} is not globally asymptotically stable, even if the parametric restrictions in Theorem 5 \cite{RK15} are met. In particular, under the restrictions of Theorem 5 \cite{RK15}, solutions to \eqref{eq:1d} can blow up in finite time, for sufficiently large initial data. That is

\begin{equation*}
\lim_{t\rightarrow T^{\ast}<\infty}\| Y \| \rightarrow \infty.
 \end{equation*}

\end{theorem}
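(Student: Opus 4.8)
The plan is to reduce the problem to a scalar differential inequality for the predator component $Y$ alone, and to exploit the key observation that for sufficiently large initial data the solution blows up \emph{before} a single delay interval $\tau$ has elapsed, so that the delayed interference term is controlled by the prescribed initial history rather than by the exploding solution itself.

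First I would invoke the positive invariance result of \cite{RK15}, which guarantees $X(t)\ge 0$, and hence $X(t-\tau)+D_{1}\ge D_{1}$ along solutions. The predator equation in \eqref{eq:1d} then yields the pointwise lower bound
\begin{equation*}
\frac{dY}{dt}=cY^{2}-\frac{\omega_{1}Y(t)Y(t-\tau)}{X(t-\tau)+D_{1}}\ge cY^{2}-\frac{\omega_{1}}{D_{1}}\,Y(t)\,Y(t-\tau).
\end{equation*}
The difficulty is the delayed factor $Y(t-\tau)$. A naive comparison using a constant history $\phi_{2}\equiv A$ fails, because the interference coefficient $\omega_{1}A/D_{1}$ then scales linearly with the data size $A$, so the blow-up threshold $c>\omega_{1}/D_{1}$ cannot be beaten by enlarging $A$. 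This is precisely the obstruction that the restrictions of Theorem 5 \cite{RK15} are built to exploit, and overcoming it is the crux of the argument.

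The resolution is to choose the initial history so that $\phi_{2}(0)=A$ is large while $\phi_{2}$ stays below a \emph{fixed} constant $M_{0}$ (independent of $A$) on the earlier portion $[-\tau,-\tau/2]$. Then for $t\in[0,\tau/2]$ one has $t-\tau\in[-\tau,-\tau/2]$, so $Y(t-\tau)=\phi_{2}(t-\tau)\le M_{0}$, and the inequality closes into the autonomous Riccati inequality
\begin{equation*}
\frac{dY}{dt}\ge cY^{2}-\beta Y,\qquad \beta:=\frac{\omega_{1}M_{0}}{D_{1}},
\end{equation*}
valid on $[0,\tau/2]$ with $\beta$ independent of $A$. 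By the ODE comparison principle, $Y(t)\ge u(t)$ on this interval, where $u$ solves $\dot u=cu^{2}-\beta u$ with $u(0)=A$.

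Finally I would estimate the blow-up time of $u$. For $A\ge 2\beta/c$ one has $cu^{2}-\beta u\ge \tfrac12 cu^{2}$ along the increasing trajectory, whence $u$ becomes unbounded at a time $T^{\ast}(A)\le 2/(cA)$, which tends to $0$ as $A\to\infty$. Choosing $A>\max\{2\beta/c,\,4/(c\tau)\}$ forces $T^{\ast}(A)<\tau/2$, so $u$, and therefore $Y$, blows up strictly before the comparison regime ends; hence $\|Y\|\to\infty$ as $t\to T^{\ast}<\infty$, under any parameters, in particular under the restrictions of Theorem 5 \cite{RK15}. The main technical care lies in verifying that no earlier escape or loss of positivity invalidates the comparison on $[0,\tau/2]$ and in exhibiting $\phi_{2}$ as a legitimate continuous nonnegative initial function; I note the same scheme adapts verbatim to a growth term $cY^{m}$ with $1<m<2$, since $\int^{\infty}u^{-m}\,du<\infty$ still yields $T^{\ast}(A)\to 0$, which accounts for Corollary \ref{cor:t11} and the contrast with the non-delayed model \eqref{eq:1}.
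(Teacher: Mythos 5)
Your proposal is correct, but it reaches the conclusion by a genuinely different route than the paper. The paper keeps the \emph{constant} history, controls the delayed denominator through an exponential lower estimate on the prey, $X(t-\tau)\ge X(0)e^{-\omega(t-\tau)}$ (from $\dot X\ge-\omega X$), and then argues by contradiction: assuming $Y<M$ on an arbitrary interval $[-\tau,t_{1}]$, it derives the Riccati inequality $\dot Y\ge cY^{2}-\omega_{1}MCY/X(0)$ (with $C$ a bound for $e^{\omega(t_{1}-\tau)}$) and concludes $Y$ cannot remain bounded once $Y_{0}>\omega_{1}MC/(cX(0))$. You instead use only positivity, $X(t-\tau)+D_{1}\ge D_{1}$, and neutralize the delayed factor $Y(t-\tau)$ not by a boundedness hypothesis but by a choice of history: $\phi_{2}\le M_{0}$ on $[-\tau,-\tau/2]$ with $\phi_{2}(0)=A$ huge, so that on $[0,\tau/2]$ the delayed term is frozen at the prescribed history and the problem closes into the autonomous inequality $\dot Y\ge cY^{2}-\beta Y$ with $\beta$ independent of $A$, after which $T^{*}(A)\le 2/(cA)<\tau/2$ finishes the proof inside the comparison window. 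Both arguments end in the same Riccati comparison, but yours buys an explicit, non-circular largeness threshold, $A>\max\{2\beta/c,\,4/(c\tau)\}$, and an explicit blow-up time; this sidesteps a real delicacy in the paper's version, where the required size of $Y_{0}$ depends on the assumed bound $M$ and on $C=e^{\omega(t_{1}-\tau)}$, quantities tied to the very boundedness being contradicted. What the paper's route aims for is blow-up from the natural constant history (and the option of forcing blow-up by enlarging $X_{0}$ with $Y_{0}$ moderate), whereas your construction produces blow-up only for specially shaped histories -- small on $[-\tau,-\tau/2]$, large at $\theta=0$ -- which still satisfies the theorem's existential claim of ``sufficiently large initial data'' in the sup norm; if you wanted constant data as well, you would need to re-insert the prey information, noting that for constant history and $t\in[0,\tau]$ one has $X(t-\tau)=X_{0}$ exactly, so your same argument runs whenever $X_{0}+D_{1}>\omega_{1}/c$. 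Your closing remark that the scheme adapts to $cY^{m}$, $1<m<2$, matches the paper's Corollary \ref{cor:t11} and is likewise correct.
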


\begin{proof}
We begin with a lower estimate for $X$. Clearly

\begin{eqnarray}
\label{eq:2}
&& \frac{dX}{dt} = rX(1-\frac{X}{K})-\frac{\omega XY}{D+dX+Y}  \nonumber \\
&& \geq rX - rX(\frac{K}{K}) - \omega X \nonumber \\
&& = - \omega X. 
\end{eqnarray}


This follows because via a simple comparison argument, by comparing the solution of the prey equation in \eqref{eq:1d}, to the ODE $ \frac{dX}{dt} = rX(1-\frac{X}{K})$, we obtain that the value of $X$ solving \eqref{eq:1d} satisfies $X \leq K$. Furthermore $\frac{\omega Y}{D+dX+Y} \leq \omega$. Thus integrating \eqref{eq:2}in the time interval $[0,t-\tau]$, we obtain

\begin{eqnarray}
X(t-\tau) \geq X(0)e^{-\omega(t-\tau)}. 
\end{eqnarray}
Thus

\begin{eqnarray}
\frac{-1}{X(t-\tau)} \geq -\frac{e^{\omega(t-\tau)}}{X(0)}. 
\end{eqnarray}

For our purposes we assume constant history or IC, that is 

\begin{eqnarray}
X(\theta)&=&\phi_{1}(\theta) = X_{0} \geq 0, Y(\theta)= \phi_{2}(\theta) =Y_{0} \geq 0,\nonumber\\ 
\theta &\in& [-\tau, 0], \phi_{1}(0) =X_{0} >0, \phi_{2}(0) =Y_{0} >0.\nonumber\\
\end{eqnarray}

Now we consider the equation for the predator $Y$ in \eqref{eq:1d}. Note

\begin{eqnarray}
\label{eq:y1}
&&\frac{dY}{dt} = Y\left(cY-\frac{\omega_{1}Y(t-\tau)}{X(t-\tau)+D_{1}}\right) \nonumber \\
&& \geq cY^2 - \left(\frac{\omega_{1}Y Y(t-\tau)}{X(t-\tau)}\right) \nonumber \\
&& \geq cY^2 - \omega_{1}Y Y(t-\tau)\frac{e^{\omega(t-\tau)}}{X(0)}. \nonumber \\
\end{eqnarray}

Let us now assume that $Y$ is bounded on $[-\tau, t_{1}]$, where $t_{1}$ can be arbitrarily large. That is $Y<M, t \in [-\tau, t_{1}]$.Then we have 

\begin{eqnarray}
 |Y(t-\tau)| < \max(M,\phi_{2}(\theta)=Y_{0}), t \in [-\tau, t_{1}].
\end{eqnarray}

Thus

\begin{eqnarray}
&&\frac{dY}{dt} \geq cY^2 - \omega_{1}Y Y(t-\tau)\frac{e^{\omega(t-\tau)}}{X(0)}  \nonumber \\
&& >  cY^2  - \omega_{1}M\frac{e^{\omega(t_{1}-\tau)}Y}{X(0)}  \nonumber \\
&& =  cY^2  - \omega_{1}M\frac{CY}{X(0)}.  \nonumber \\
\end{eqnarray}
Here $C$ is an upper bound on $e^{\omega(t_{1}-\tau)}$. Thus we obtain,

\begin{equation}
\label{eq:1c}
\frac{dY}{dt} \geq cY^2  - \omega_{1}M\frac{C Y}{X(0)}. 
\end{equation}

However, the solution to equation \eqref{eq:1c} clearly blows up in finite time, if the initial data $Y_{0}$ is large enough, for a fixed $X_{0}$. 

To see this we can compare to the equation 

\begin{equation}
\label{eq:1b}
\frac{dY}{dt} =   cY^2  - \omega_{1}M\frac{C Y}{X(0)},
\end{equation}

which blows up as long as the initial data $Y_0$ is larger than $\omega_{1}M\frac{C}{c |X(0)|}$, and $c>0$.
 Furthermore, by choosing $X_{0} >>1$, we can induce blow-up for $Y_{0}$ small as well.
Thus the $Y$ solving \eqref{eq:1c} and \eqref{eq:1d} by simple comparison also blow-up in finite time.
This tells us that $Y$ cannot be bounded on any arbitrary time interval $[-\tau,t_{1}]$, and must blow up at some finite time $T^{*} <t_{1} <\infty$.
 This proves the Theorem.
\end{proof}

\begin{remark}
Note that in the above theorem the only condition required for blow-up, is the positivity of $c$, and the largeness of the initial data. These \emph{do not} hinder the parametric restrictions required in Theorem 5 \cite{RK15}. Thus $Y$ will blow-up for initial data chosen large enough, even under the parametric restrictions of Theorem 5 \cite{RK15}.
\end{remark}

We next state and prove a finite time blow-up result for the time delayed model, with slight modifications.

\begin{corollary}
\label{cor:t11}
Consider the delayed system \eqref{eq:1d}, with the $cY^2$ term replaced by $cY^m$, $1<m<2$. Even if the parametric restrictions in Theorem 5 are met solutions to the modified model can blow up in finite time, for sufficiently large initial data. That is

\begin{equation*}
\lim_{t\rightarrow T^{\ast}<\infty}\| Y \| \rightarrow \infty.
 \end{equation*}

\end{corollary}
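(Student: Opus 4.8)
The plan is to replay the argument of Theorem~\ref{thm:t1} almost verbatim, the sole essential change occurring at the point where the superlinear source term is weighed against the effective drain. First I would reuse the prey estimate unchanged: since $X\le K$ and $\tfrac{\omega Y}{D+dX+Y}\le\omega$, comparison with $\tfrac{dX}{dt}=-\omega X$ yields $X(t-\tau)\ge X_{0}e^{-\omega(t-\tau)}$, and therefore
\begin{equation*}
\frac{1}{X(t-\tau)+D_{1}}\ \le\ \frac{1}{X(t-\tau)}\ \le\ \frac{e^{\omega(t-\tau)}}{X_{0}}.
\end{equation*}
With the $cY^{2}$ term replaced by $cY^{m}$ the predator equation becomes $\tfrac{dY}{dt}=cY^{m}-\tfrac{\omega_{1}Y\,Y(t-\tau)}{X(t-\tau)+D_{1}}$, so inserting the prey bound gives $\tfrac{dY}{dt}\ge cY^{m}-\omega_{1}Y\,Y(t-\tau)\tfrac{e^{\omega(t-\tau)}}{X_{0}}$, exactly as in \eqref{eq:y1} but with the modified source.

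Next I would argue by contradiction as before. Supposing $Y<M$ on $[-\tau,t_{1}]$ for arbitrarily large $t_{1}$, one has $|Y(t-\tau)|<\max(M,Y_{0})$ and $e^{\omega(t-\tau)}\le C$ on this interval, where $C$ bounds $e^{\omega(t_{1}-\tau)}$. Substituting collapses the delayed interference into a term that is \emph{linear} in the current value $Y$:
\begin{equation*}
\frac{dY}{dt}\ \ge\ cY^{m}-KY,\qquad K:=\omega_{1}M\frac{C}{X_{0}}.
\end{equation*}
I would then compare with the scalar ODE $\tfrac{dy}{dt}=cy^{m}-Ky$. For $m>1$ the source dominates the linear loss once $y\ge(2K/c)^{1/(m-1)}$, whence $\tfrac{dy}{dt}\ge\tfrac{c}{2}y^{m}$; since $\int^{\infty}y^{-m}\,dy<\infty$ for every $m>1$, this blows up in finite time. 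Choosing $Y_{0}$ above the threshold $(2K/c)^{1/(m-1)}$ (and, as in Theorem~\ref{thm:t1}, taking $X_{0}$ large relaxes how large $Y_{0}$ must be) forces the comparison solution, hence $Y$, to blow up at some $T^{*}<t_{1}$, contradicting boundedness. As in the remark following Theorem~\ref{thm:t1}, the only conditions invoked are $c>0$ and largeness of data, so the conclusion survives under the restrictions of Theorem~5 of \cite{RK15}.

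The conceptual content of the corollary — the one point I would highlight rather than merely compute — is \emph{why} the range $1<m<2$ is admissible here but fails for the non delayed model~\eqref{eq:1}. In~\eqref{eq:1} the interference term $\tfrac{\omega_{1}Y^{2}}{X+D_{1}}$ is genuinely quadratic in $Y$; against a source $cY^{m}$ with $m<2$ this quadratic drain wins for large $Y$ and quenches blow-up. The delay is precisely what breaks this: the interference is proportional to $Y(t)\,Y(t-\tau)$, and bounding the delayed factor $Y(t-\tau)$ by a constant on the test interval leaves a drain that is only \emph{linear} in $Y(t)$. Any superlinear source, that is any $m>1$, then outpaces a linear loss and drives finite-time blow-up. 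There is no genuine analytic obstacle beyond checking the integrability condition $\int^{\infty}y^{-m}\,dy<\infty$, valid for all $m>1$; the real work is the observation that the delay downgrades the effective order of the competitive term from two to one, which is exactly the structural distinction between the two models that the corollary is designed to expose.
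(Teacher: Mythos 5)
Your proposal takes essentially the same route as the paper: replay the prey estimate and the boundedness-by-contradiction argument of Theorem~\ref{thm:t1} to reduce the modified predator equation to the scalar comparison ODE $\frac{dY}{dt}\geq cY^{m}-\omega_{1}M\frac{C}{X_{0}}Y$, and then note that this ODE blows up in finite time for large data, which is exactly the paper's equation \eqref{eq:1bnn}. The paper dismisses the final step as trivial, whereas you make it explicit via the threshold $(2K/c)^{1/(m-1)}$ and the integrability of $\int^{\infty}y^{-m}\,dy$ for $m>1$ (and you correctly identify that the delay downgrades the interference term to an effectively linear drain, which is why $1<m<2$ suffices here but not in the non-delayed model); these are faithful elaborations of the same argument, not a different proof.
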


\begin{proof}
The proof follows trivially by observing that just as in \eqref{eq:1b}, the solution to

\begin{equation}
\label{eq:1bnn}
\frac{dY}{dt} =   cY^m  - \omega_{1}M\frac{C Y}{X(0)}, \ 1<m<2, 
\end{equation}

will also blow-up in finite time for large enough initial data. This proves the corollary.

\end{proof}

We next state and prove the finite time blow-up result for the non delayed model.

\begin{theorem}
\label{thm:t2}
Consider the two species model given by equation \eqref{eq:1}, for any choice of parameters, including the ones satisfying Theorem 5, and a $\delta_{1} > 0$, such that $c > \delta_{1}$.  Given any initial data $Y_{0}$, there exists initial data $X_{0}$, such that if this data meets the largeness condition

\begin{equation}
\frac{\omega}{\delta_{1}} < |Y_{0}| \ln\left( \frac{|X_{0}|}{\frac{\omega}{c - \delta_{1}} - D_{1}}\right) 
\end{equation}

then \eqref{eq:1} will blow-up in finite time, that is

\begin{equation*}
\lim_{t\rightarrow T^{\ast}<\infty}\| Y \| \rightarrow \infty.
 \end{equation*}

Here the blow-up time $T^{*} \leq \frac{1}{\delta_{1}|Y_{0}|}$

\end{theorem}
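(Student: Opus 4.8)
The plan is to collapse the whole problem onto the scalar predator equation and a Riccati comparison, which is exactly what the stated blow-up bound $T^{\ast}\le \frac{1}{\delta_{1}|Y_{0}|}$ signals. Rewriting the second equation of \eqref{eq:1} as
\[
\frac{dY}{dt} = Y^{2}\left(c-\frac{\omega_{1}}{X+D_{1}}\right),
\]
I note that the sign and size of the bracket are governed entirely by how large $X$ is. Define the threshold $X^{\ast}:=\frac{\omega_{1}}{c-\delta_{1}}-D_{1}$, i.e. the value at which the bracket equals $\delta_{1}$. If $X(t)\ge X^{\ast}$ on an interval, then $c-\frac{\omega_{1}}{X+D_{1}}\ge \delta_{1}$ there, so $\frac{dY}{dt}\ge \delta_{1}Y^{2}$. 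Comparing with $\dot W=\delta_{1}W^{2}$, $W(0)=Y_{0}$, whose explicit solution $W(t)=Y_{0}/(1-\delta_{1}Y_{0}t)$ blows up at $t=1/(\delta_{1}Y_{0})$, a standard scalar comparison forces $Y$ to blow up no later than $T^{\ast}\le \frac{1}{\delta_{1}|Y_{0}|}$, accounting for the stated blow-up time.

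The substance is thus to guarantee $X(t)\ge X^{\ast}$ throughout $[0,T^{\ast}]$. For this I reuse the lower estimate established in the proof of Theorem \ref{thm:t1}: bounding predation by $\frac{\omega XY}{D+dX+Y}\le \omega X$ and invoking the logistic upper bound $X\le K$ gives $\frac{dX}{dt}\ge -\omega X$, hence $X(t)\ge X_{0}e^{-\omega t}$. Since this lower bound decreases in $t$, its minimum over $[0,T^{\ast}]$ is attained at $t=T^{\ast}$, so it suffices to demand $X_{0}e^{-\omega/(\delta_{1}Y_{0})}\ge X^{\ast}$. Taking logarithms and rearranging reproduces exactly the largeness condition $\frac{\omega}{\delta_{1}}\le |Y_{0}|\ln\!\big(|X_{0}|/X^{\ast}\big)$ in the statement. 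For any fixed $Y_{0}$ one then simply picks $X_{0}$ large enough to satisfy this inequality, which is possible as soon as $X^{\ast}>0$.

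Assembling the pieces: under the largeness condition we have $X(t)\ge X_{0}e^{-\omega t}\ge X^{\ast}$ for every $t\in[0,T^{\ast}]$, so $\frac{dY}{dt}\ge \delta_{1}Y^{2}$ holds across the entire window, and the Riccati comparison drives $\|Y\|\to\infty$ as $t\to T^{\ast}<\infty$. I would close by observing that only $c>\delta_{1}>0$ and the largeness of the data are used, so none of the steps conflicts with the parameter restrictions of Theorem 5 of \cite{RK15}.

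The step I expect to be the main obstacle is the lower bound on $X$ over the full blow-up window. The estimate $X(t)\ge X_{0}e^{-\omega t}$ rests on the dissipative bound $X\le K$, which is immediate when $X_{0}\le K$ but needs care if the largeness condition forces $X_{0}>K$, since then the logistic term $-rX^{2}/K$ can initially push $X$ down faster than $e^{-\omega t}$. One must verify that the margin in the largeness condition still keeps $X$ above $X^{\ast}$ on $[0,T^{\ast}]$, either by working in the relevant regime $X^{\ast}<X_{0}\le K$ or by sharpening the estimate using the saturation bound $\frac{\omega XY}{D+dX+Y}\le \frac{\omega Y}{d}$ that holds when $X$ is large. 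Everything else reduces to routine scalar comparison arguments.
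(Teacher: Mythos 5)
Your proposal is correct and is essentially the paper's own argument: the Riccati comparison $\frac{dY}{dt}\ge\delta_{1}Y^{2}$, valid while $X$ exceeds the threshold, the lower bound $X(t)\ge X_{0}e^{-\omega t}$, and the requirement that the time during which $X$ is guaranteed to stay above threshold exceed the Riccati blow-up time $\frac{1}{\delta_{1}|Y_{0}|}$, which rearranges to exactly the stated largeness condition. Two side remarks: your threshold $\frac{\omega_{1}}{c-\delta_{1}}-D_{1}$ is the one the model \eqref{eq:1} actually dictates (the paper writes $\omega$ in its place, an apparent typo), and the caveat you flag about $X_{0}>K$ is genuine but applies equally to the paper's proof, which asserts $X\le K$ by comparison with the logistic equation --- valid only when $X_{0}\le K$ --- so your proposed restriction to the regime $X^{\ast}<X_{0}\le K$ (or a corrected decay rate when $X_{0}>K$) is the natural repair in both arguments.
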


\begin{proof}
Consider the equation for the predator

\begin{equation*}
\frac{dY}{dt}=\left(c-\frac{\omega}{X+\omega_{12}}\right)Y^{2}.
\end{equation*}

In the event that $c> \frac{\omega}{D_{1}}$, blow-up is trivial. If $c < \frac{\omega}{D_{1}}$,  blow-up is far from obvious. However still possible for large data. To see this note, if

\begin{equation*}
\left(c-\frac{\omega}{X+D_{1}}\right) > \delta_{1} > 0,
\end{equation*}

then $Y$ will blow-up in finite time in comparison with

\begin{equation*}
\frac{dY}{dt}=\delta_{1} Y^{2}
\end{equation*}

The tricky part here is that $\left(c-\frac{\omega}{X+D_{1}}\right)$ can switch sign, and this is dependent on the dynamics of the prey $X$, which changes in time.
In order to guarantee blow-up, we must have that $\left(c-\frac{\omega}{X+D_{1}}\right) > \delta_{1} $ or equivalently we must guarantee that

\begin{equation}
\label{eq:ve}
X > \frac{\omega}{c- \delta_{1}} - D_{1}. 
\end{equation}

To this end we will work with the equation for the prey $X$,

\begin{equation*}
\frac{dX}{dt} = rX(1-\frac{X}{K})-\frac{\omega XY}{D+dX+Y}
\end{equation*}

as earlier, we can make the following lower estimate on $X$

\begin{equation*}
X(t) \geq X(0)e^{-\omega(t)}
\end{equation*}

we now use this in \eqref{eq:ve} to yield

\begin{equation*}
|X| >  |X_{0}|e^{-\omega(t)}> \frac{\omega}{c- \delta_{1}} - D_{1}.
\end{equation*}

Equivalently, 

\begin{equation*}
 \ln\left( \frac{|X_{0}|}{\frac{\omega}{c- \delta_{1}} - D_{1}} \right) > t \omega.
\end{equation*}

 Now note that

\begin{equation*}
\frac{dY}{dt}=\delta_{1} Y^{2}
\end{equation*}

blows-up at time $T^{*} = \frac{1}{\delta_{1}|Y_{0}|}$, thus if we choose data such that

\begin{equation*}
  \ln\left( \frac{|X_{0}|}{\frac{\omega}{c- \delta_{1}} - D_{1}} \right) \frac{1}{\omega}> t > T^{*} = \frac{1}{\delta_{1}|Y_{0}|},
\end{equation*}

Then the above guarantees that $X$ will remain above the critical level $ \frac{\omega}{c- \delta_{1}} - D_{1}$, for sufficiently long enough time, for $y$ to blow-up. This yields that as long as the following holds

\begin{equation*}
|Y_{0}|   \ln\left( \frac{|X_{0}|}{\frac{\omega}{c- \delta_{1}} - D_{1}} \right) >\frac{\omega}{\delta_{1}},
\end{equation*}

$Y$ will blow up in finite time, independent of the choice of the parameters of Theorem 5. This proves the theorem.

\end{proof}

\section{Numerical Simulations to Demonstrate Finite Time Blow-Up}
In this section we numerically simulate \eqref{eq:1d}, for the same parameters chosen in \cite{RK15}. That is, $r=1, K=100, \omega=1, D=1.01, d=0.01, c=0.01, \omega_{1}=0.2, D_{1}=10$. Note these parameters satisfy the conditions of Theorem 5. For these parameters $E_{2}=(10,9.9)$. The simulations in \cite{RK15} are done assuming initial data $(5,5)$. We note that the dynamics claimed in \cite{RK15} are preserved as long as initial data is increased unto $(13,13)$. Once we consider initial data 
$(14,14)$ or larger, we see that finite time blow-up occurs at time slightly past $t=20.47$. This is illustrated in Figure \ref{fig:1t}.

\begin{figure}[htb]
{
\includegraphics[scale=0.15]{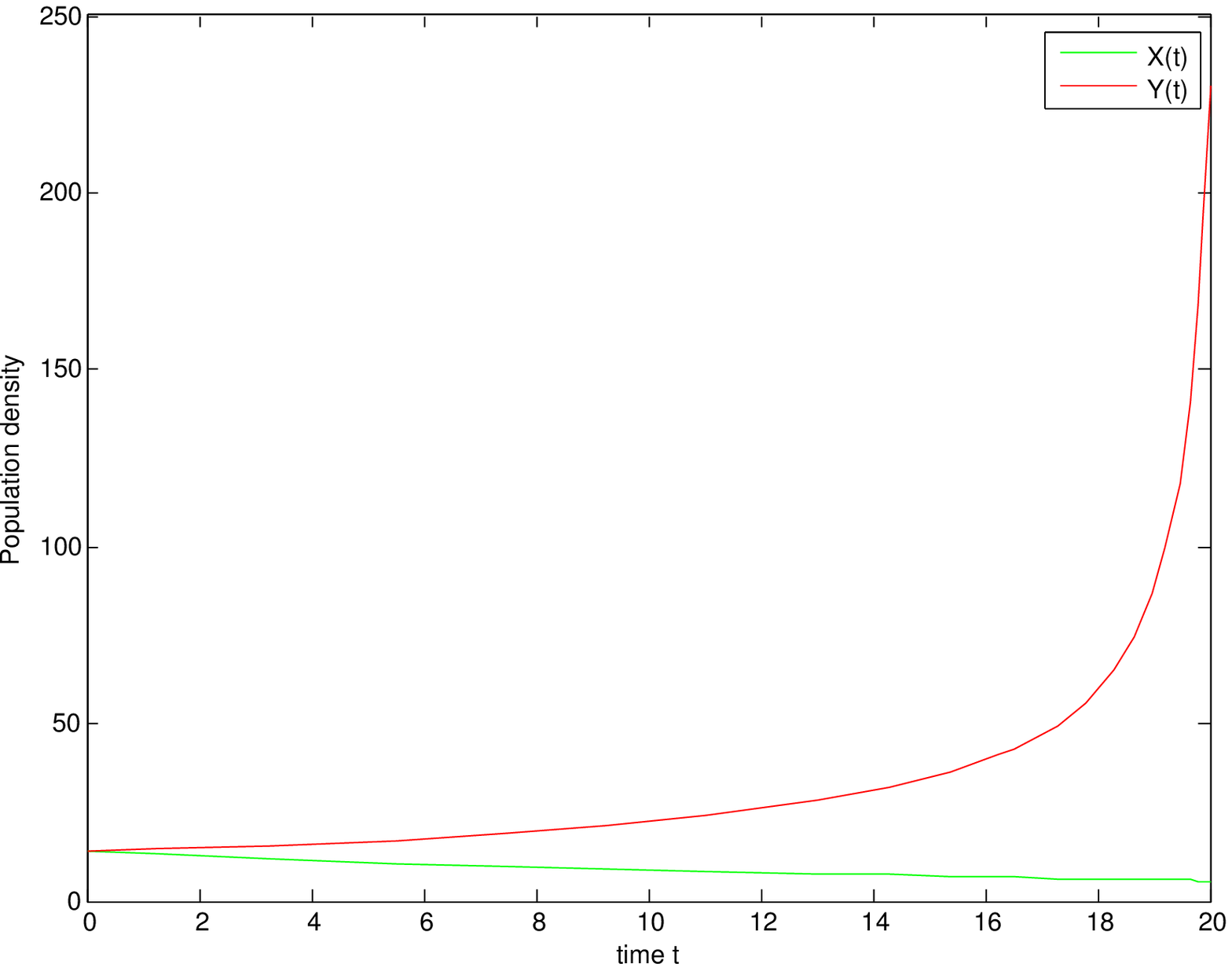}}
{
\includegraphics[scale=0.15]{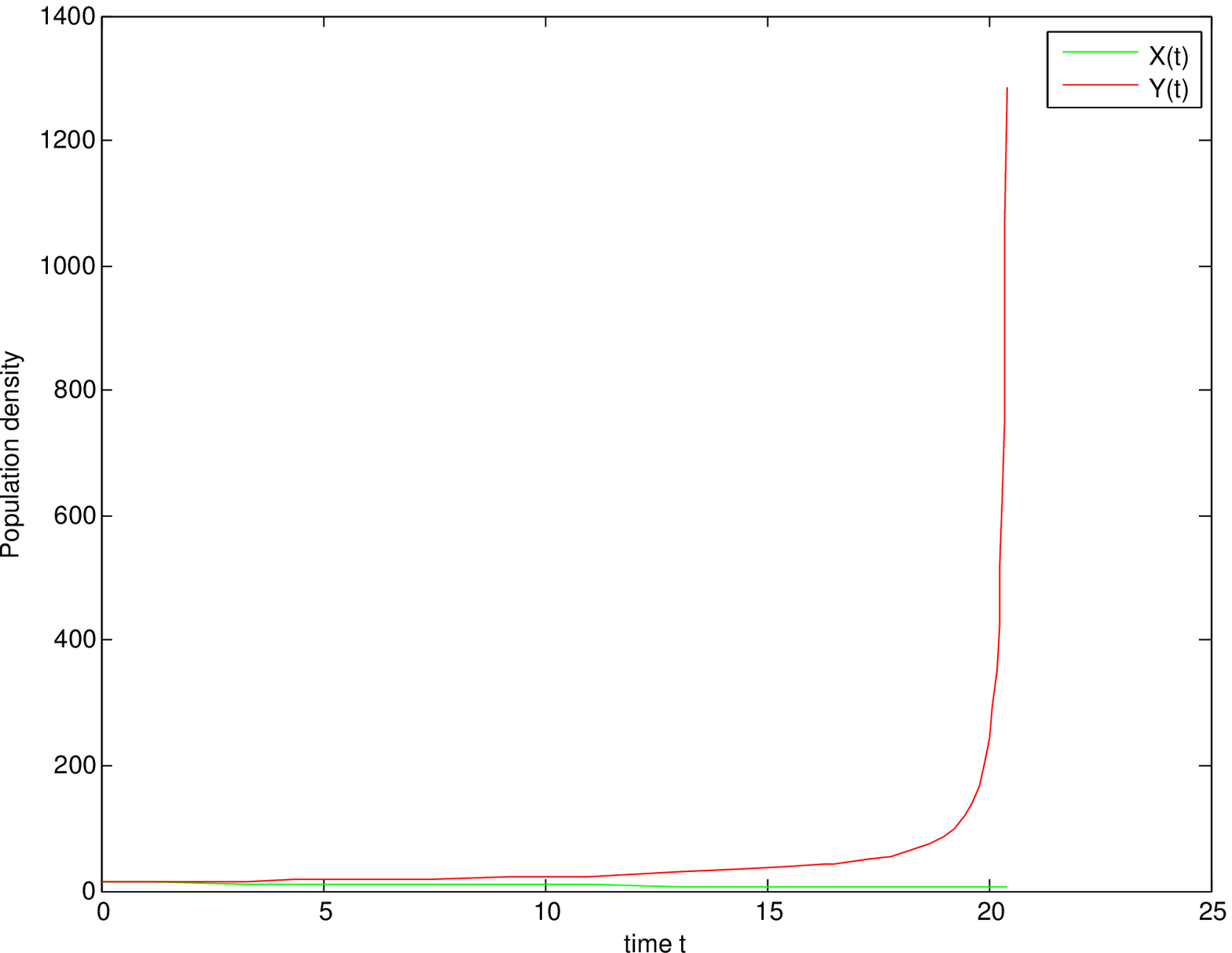}}
{
\includegraphics[scale=0.15]{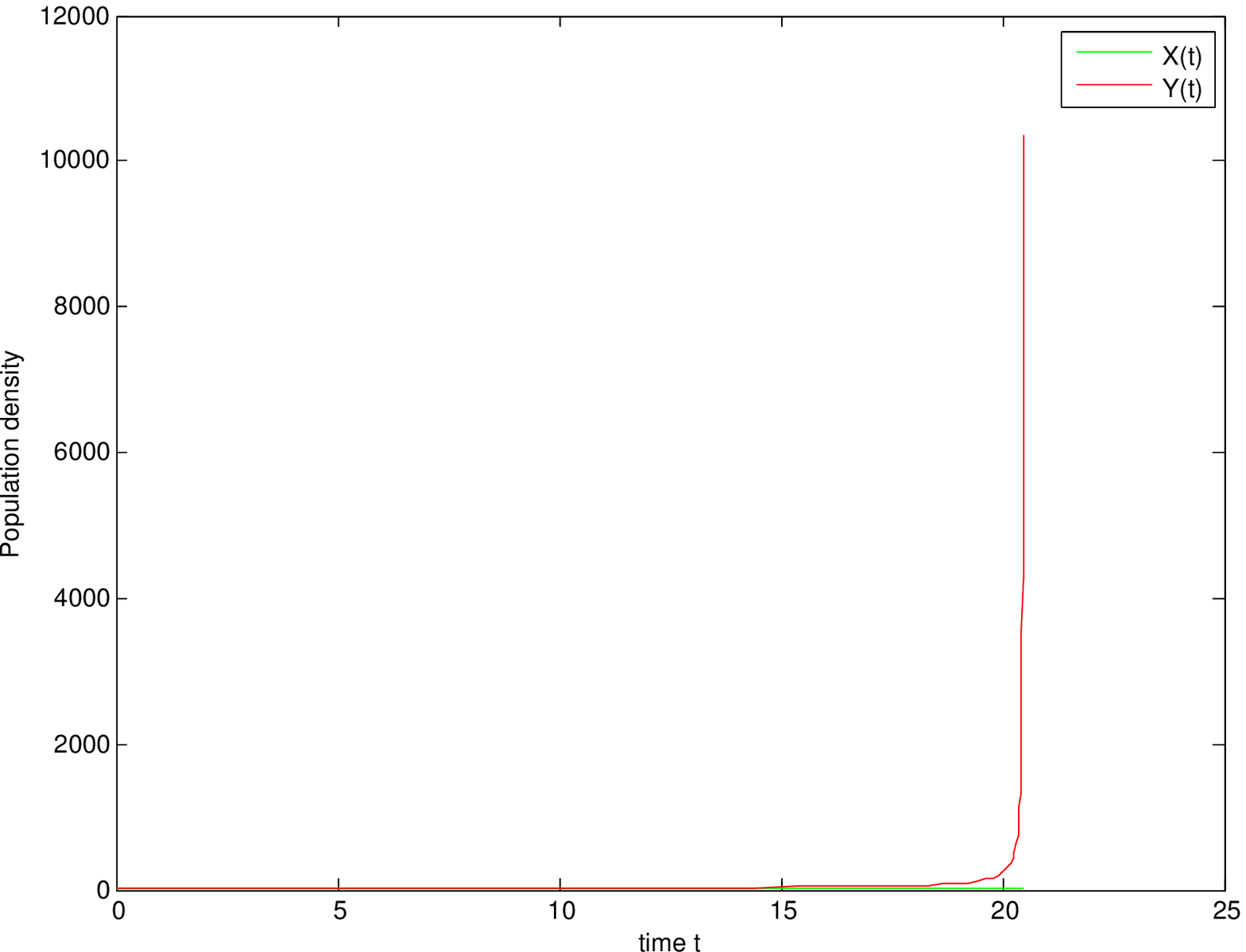}}
\caption{The populations for the prey and predator species are shown for an increasing sequence of times $t=20$, $t=20.4$ and $t=20.47$. We see that the predator population $Y(t)$ is blowing-up. Here we choose initial data $(14,14)$.}
\label{fig:1t}
\end{figure}

Next we simulate 
 \eqref{eq:1d}, for the same parameters chosen in \cite{RK15}. That is, $r=1, K=100, \omega=1, D=1.01, d=0.01, c=0.01, \omega_{1}=0.2, D_{1}=10$. We replace the $cY^2$ term in the predator equation, by $cY^{1.6}$. We still see finite time blow-up occurs for large initial data. Note, this is \emph{not} possible in the no delay case $(\tau=0)$.

\begin{figure}[htb]
{
\includegraphics[scale=0.15]{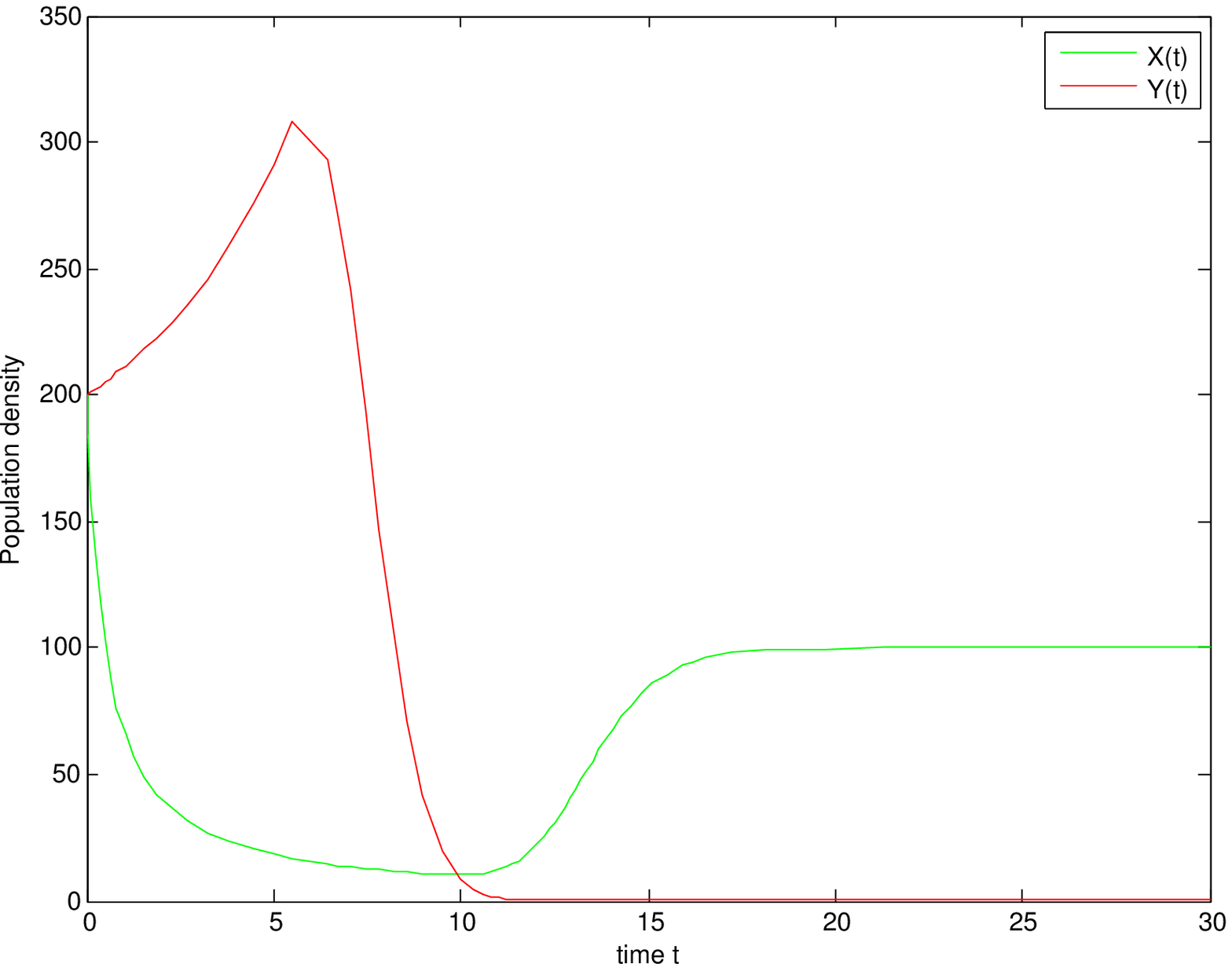}}
{
\includegraphics[scale=0.15]{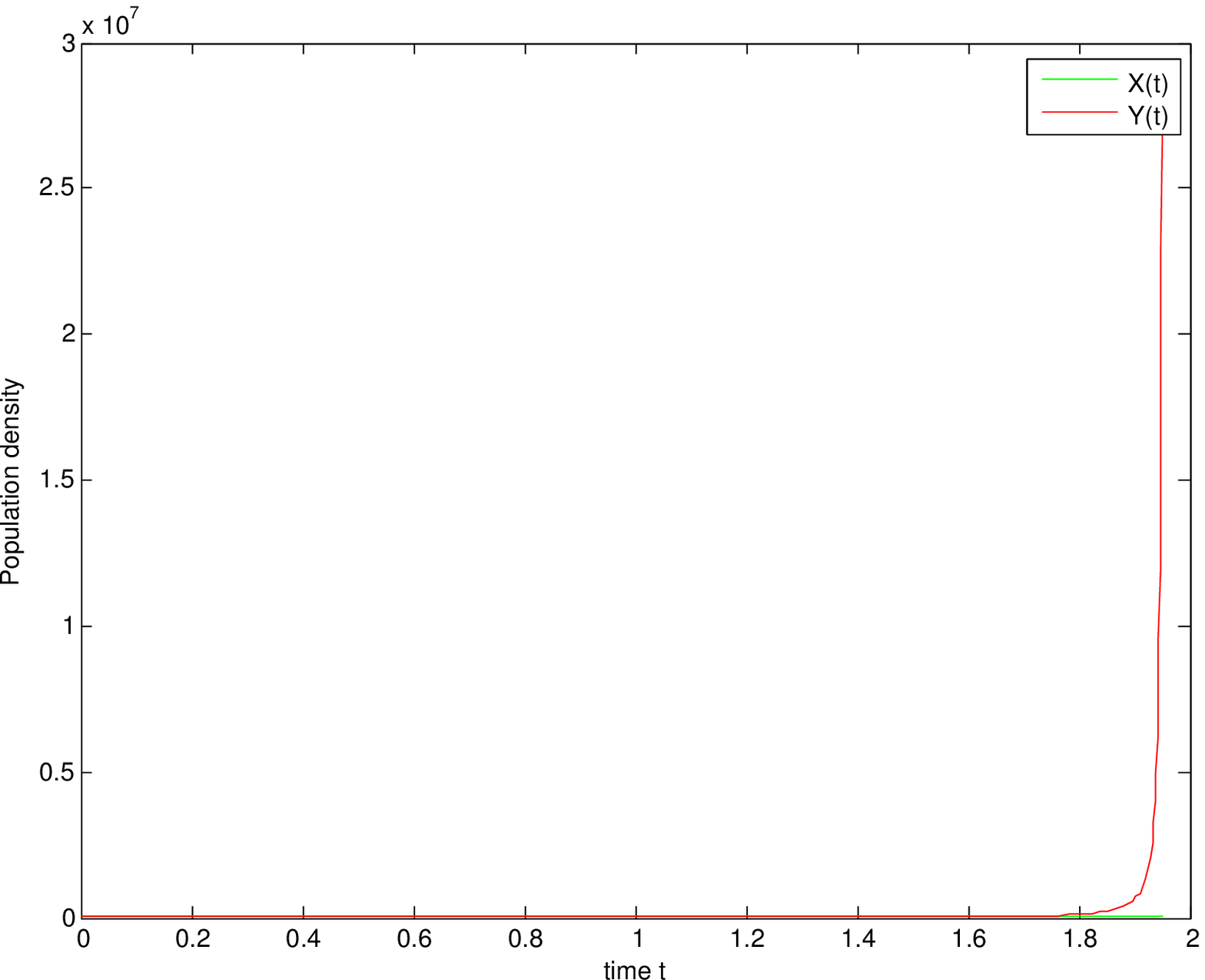}}

\caption{The left panel shows model  \eqref{eq:1d} $cY^2$ term in the predator equation, by $cY^{1.6}$. We choose initial data to be $(200,200)$. What we observe here is that the predator population starts to increase initially, but is pulled back down. The right panel shows a simulation of the same model, but when we choose initial data to be $(2000,2000)$. The populations for the prey and predator species are shown at $t=1.95$. Now we clearly see that the predator population $Y(t)$ is blowing-up. In fact it blows-up at about $t=1.955$. }
\label{fig:1t1}
\end{figure}

\section{Turing Instability}
To formulate the spatially explicit form of the earlier described ODE models, we assume that the predator and prey populations move actively in space. Random movement of animals occurs because of various requirements and necessities like, search for better food, better opportunity for social interactions such as finding mates \cite{S97}. Food availability and living conditions demand that these animals migrate to other spatial locations. In the proposed model, we have included diffusion assuming that the animal movements are uniformly distributed in all directions. 
The model with diffusion are described as follows

\begin{align}
\label{eq:1di}
\frac{\partial X}{\partial t}=d_1 \Delta X + X\Bigg[r\left(1-\frac{X}{K}\right)-\frac{\omega  Y}{D+dX+Y}\Bigg],
\end{align}
\begin{align}
\label{eq:1di2}
\frac{\partial Y}{\partial t}=d_2 \Delta Y + Y\Bigg[cY-\frac{\omega_1 Y}{X+D_1}\Bigg].
\end{align}

In this section we investigate Turing instability in \eqref{eq:1di}-\eqref{eq:1di2}. We uncover both spatial and spatio-temporal patterns, and provide the details of the Turing analysis. 
We derive conditions where the unique positive interior equilibrium point $(X^*,Y^*)$ is stable in the absence of diffusion, and unstable due to the action of diffusion, with a small perturbation to the positive interior equilibrium point. We first linearize model \eqref{eq:1di}-\eqref{eq:1di2} about the homogeneous steady state, we introduce both space and time-dependent fluctuations around $(X^*,Y^*)$. This is given as
\begin{subequations}\label{eq:7}
\begin{align}
X=X^* +  \hat{X}(\xi,t),\\
Y=Y^* + \hat{Y}(\xi,t),\\
\end{align}
\end{subequations}
where $| \hat{X}(\xi,t)|\ll X^*$, $| \hat{Y}(\xi,t)|\ll Y^*$. Conventionally we choose
\[
\left[ {\begin{array}{cc}
\hat{X}(\xi,t)  \\
\hat{Y}(\xi,t) \\
\end{array} } \right]
=
\left[ {\begin{array}{cc}
\epsilon_1  \\
\epsilon_2 \\
\end{array} } \right]
e^{\lambda t + ik\xi},
\]
where  $\epsilon_i$ for $i=1,2$ are the corresponding amplitudes, $k$ is the wavenumber, $\lambda$ is the growth rate of perturbation in time $t$ and $\xi$ is the spatial coordinate.
Substituting \eqref{eq:7} into \eqref{eq:1di}-\eqref{eq:1di2} and ignoring higher order terms including nonlinear terms, we obtain the characteristic equation  which  is given as
\begin{align}\label{eq:1.2.10}
({\bf J} - \lambda{\bf I} - k^2{\bf D})
\left[ {\begin{array}{cc}
\epsilon_1  \\
\epsilon_2 \\
\end{array} } \right]=0,
\end{align}
where
\[
\quad
\bf {D} =
\left[ {\begin{array}{cc}
d_1 & 0      \\
0     & d_2  \\
\end{array} } \right],
\]

\begin{equation}
\label{eq:J}
 \bf{J}= \begin{bmatrix}
     X^{*}\left(-\frac{r}{K}-\frac{\omega d y^{*}}{(D+dX^{*}+Y^{*})^2}\right)&   \frac{\omega X^*(D+dX^*)}{(D+dX^{*}+Y^{*})^2}  \\
         \frac{\omega_1Y^{*^{2}}}{(X^*+D1)^2}& cY^*-\frac{\omega_1Y^*}{X^*+D_1}    \\
            \end{bmatrix}
      =\begin{bmatrix}
       J_{11} & J_{12} \\
       J_{21} & J_{22} \\
        \end{bmatrix},
        \end{equation}
    \\
\\and $\bf{I}$ is a $2\times 2$ identity matrix.\\
For the non-trivial solution of \eqref{eq:1.2.10}, we require that
\[
\left|
\begin{array}{cc}
J_{11}-\lambda -k^2d_1 & J_{12}                         \\
       J_{21}                     & J_{22}-\lambda -k^2d_2 \\
      \\
 \end{array} \right|=0,
\]
This gives us a polynomial in $k$, and via standard theory \cite{M93}, we derive the following necessary and sufficient conditions for Turing Instability;
\begin{eqnarray}
\label{eq:cond}
J_{11}+J_{22}<0\\
J_{11}J_{22}-J_{12}J_{21}>0\\
d_{1}J_{22}+d_{2}J_{11}>0\\
(\frac{J_{11}}{d_1}+\frac{J_{22}}{d_2})^2>\frac{4(J_{11}J_{22}-J_{12}J_{21}) }{d_{1}d_{2}} \label{eq:cond2}
\end{eqnarray}
Here $J_{ij}, i,j=1,2$ are as in \eqref{eq:J}
\begin{theorem}
\label{thm:tur}
Consider the two species diffusive model system \eqref{eq:1di}-\eqref{eq:1di2}. There exists a parameter set for which the conditions via \eqref{eq:cond}-\eqref{eq:cond2} hold. Thus there exists Turing instability in model system  \eqref{eq:1di}-\eqref{eq:1di2}.
\end{theorem}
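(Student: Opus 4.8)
Since the assertion is purely existential, the plan is a proof by explicit construction: exhibit one admissible parameter set $(r,K,\omega,D,d,c,\omega_1,D_1)$ together with diffusion coefficients $d_1,d_2>0$, and verify the four inequalities \eqref{eq:cond}--\eqref{eq:cond2} by direct substitution at the homogeneous steady state. First I would solve the reaction kinetics for the interior equilibrium $(X^*,Y^*)$: the predator nullcline forces $c=\frac{\omega_1}{X^*+D_1}$, hence $X^*=\frac{\omega_1}{c}-D_1$, and the prey nullcline $r\left(1-\frac{X^*}{K}\right)=\frac{\omega Y^*}{D+dX^*+Y^*}$ then determines $Y^*$. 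With $(X^*,Y^*)$ in hand I would evaluate the entries of \eqref{eq:J} and record their signs, since the first two Turing conditions are entirely reaction-driven, while the last two involve $d_1,d_2$ as free parameters and so are, in principle, the tunable part of the construction.

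The sign structure is almost rigid. One checks that $J_{11}=X^*\left(-\frac{r}{K}-\frac{\omega d Y^*}{(D+dX^*+Y^*)^2}\right)<0$ and $J_{21}=\frac{\omega_1 Y^{*2}}{(X^*+D_1)^2}>0$ for every admissible choice. The diagonal predator entry, however, is governed by the same nullcline used to locate the equilibrium: $J_{22}=Y^*\left(c-\frac{\omega_1}{X^*+D_1}\right)$, which upon inserting $c=\frac{\omega_1}{X^*+D_1}$ collapses to $J_{22}=0$ identically, at \emph{every} interior equilibrium. The first condition $J_{11}+J_{22}<0$ then holds automatically, and whether the remaining three conditions can be satisfied rests entirely on this degeneracy.

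The main obstacle is precisely that $J_{22}=0$. The third condition $d_1J_{22}+d_2J_{11}>0$ reduces to $d_2J_{11}>0$, which is impossible since $J_{11}<0$ and $d_2>0$ — and crucially no choice of the ratio $d_1/d_2$ can repair it, so the tunability of the diffusion coefficients, on which a standard Turing construction relies, is lost. Thus the naive construction cannot close at $(X^*,Y^*)$, and the real content of the theorem is to restore a strictly positive diagonal coupling $J_{22}>0$ while keeping $J_{11}+J_{22}<0$ and a positive reaction determinant. I would therefore either introduce a small predator self-limitation term that breaks the exact nullcline identity so that $J_{22}$ becomes a small positive quantity, or check carefully whether the intended steady state is in fact a non-degenerate one lying off this nullcline; only then does choosing $d_1/d_2$ large make \eqref{eq:cond} (third line) and \eqref{eq:cond2} hold. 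Pinning down such a state, rather than the routine final sign check, is where I expect the essential difficulty to lie.
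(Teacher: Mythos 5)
Your analysis is correct, and the ``obstacle'' you uncovered is not a gap in your attempt --- it is a gap in the paper. The paper in fact offers \emph{no} proof of Theorem \ref{thm:tur}: the theorem is stated bare, and the only support is the numerical parameter set of the following subsection ($r=0.11$, $\omega=1.11$, $c=2.81$, $d=2.31$, $\omega_1=1.32$, $D=0.1$, $D_1=0.09$, $d_1=10^{-5}$, $d_2=10^{-2}$; note $K$ is never even specified) together with the pattern figures. So the paper's implicit argument is exactly the ``exhibit parameters and check the four inequalities'' construction you set out to perform, and you have shown that this construction can never close. Since the predator kinetics are $Y^2\bigl(c-\frac{\omega_1}{X+D_1}\bigr)$, every interior homogeneous steady state satisfies $c=\frac{\omega_1}{X^*+D_1}$, hence $J_{22}=0$ identically; the trace condition $J_{11}+J_{22}<0$ and the third condition $d_1J_{22}+d_2J_{11}>0$ then require $J_{11}<0$ and $J_{11}>0$ simultaneously. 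This contradiction involves neither the diffusion coefficients nor the remaining parameters, so no parameter set can satisfy \eqref{eq:cond}--\eqref{eq:cond2} at a genuine interior equilibrium: the existence claim is false as stated, and whatever the simulations display, it cannot be a diffusion-driven instability of a linearly stable interior state certified by these four conditions. Your diagnosis that the content of the theorem would have to be restored by breaking the degeneracy (e.g.\ a predator self-limitation term, or a non-degenerate steady state off the nullcline) is the right one, but that amounts to changing the model, not proving the stated theorem.

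Two smaller remarks for completeness. First, the matrix \eqref{eq:J} itself carries sign errors relative to the correct linearization (compare $a_{11}$, $a_{12}$ in Section \ref{sec:3}): evaluated at the equilibrium one gets $J_{11}=X^*\bigl(-\frac{r}{K}+\frac{\omega dY^*}{(D+dX^*+Y^*)^2}\bigr)$, which can have either sign, and $J_{12}<0$ rather than $J_{12}>0$. You adopted the paper's printed signs, which is fair since the theorem refers to \eqref{eq:J}, but the correction does not rescue the result: if $J_{11}<0$ the third condition still fails, and if $J_{11}>0$ the trace condition fails. Second, with the paper's printed signs ($J_{12}>0$, $J_{21}>0$, $J_{22}=0$) even the determinant condition $J_{11}J_{22}-J_{12}J_{21}>0$ fails, so under the paper's own formulas \emph{two} of the four conditions are unsatisfiable. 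In short, your proposal is sound mathematics; it simply proves that the theorem it was asked to establish cannot be established.
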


\subsection{Numerical Result:}
Here we demonstrate Turing patterns that form in 1D.  The parameters used in this section are $r               = 0.11, w=1.11, c_{1}=2.81, d = 2.31, w_{1}= 1.32, D=0.1, D_{1}=0.09, d_1=1.0\times 10^{-5}$ and $d_2 =1.0\times 10^{-2}$. The initial condition used is a small perturbation around the positive homogeneous steady state given as 
\begin{align*}
X=X^{*} + \epsilon_1  cos^2(10x)(x > 0)(x < \pi),\\
Y=Y^{*}  + \epsilon_2 cos^2(10x)(x > 0)(x < \pi),\\
\end{align*}
where $ \epsilon_i=0.005$ $\forall i$. 

\begin{figure}[htb]
\subfigure{
\includegraphics[scale =.4]{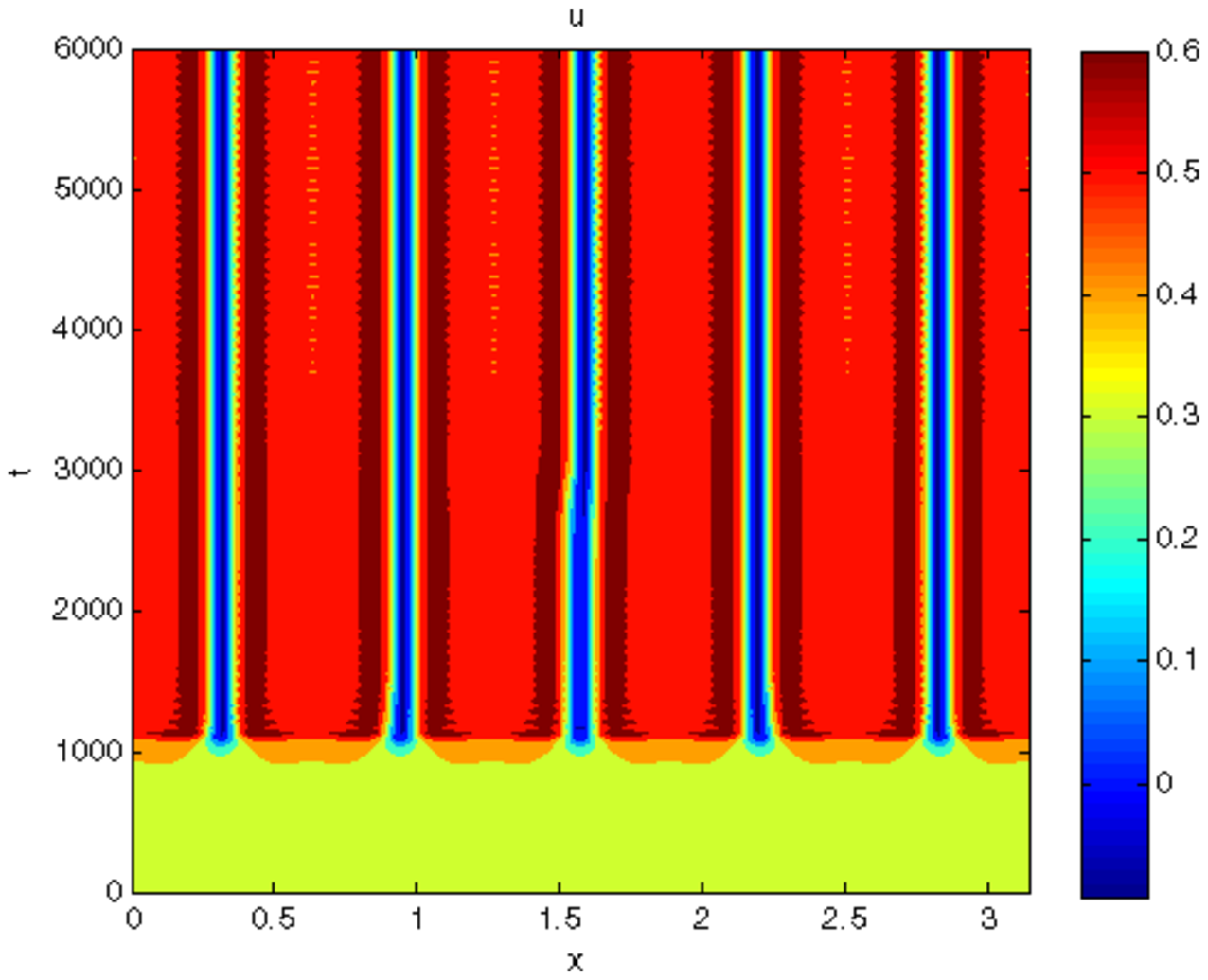}}
\subfigure{
\includegraphics[scale=.4]{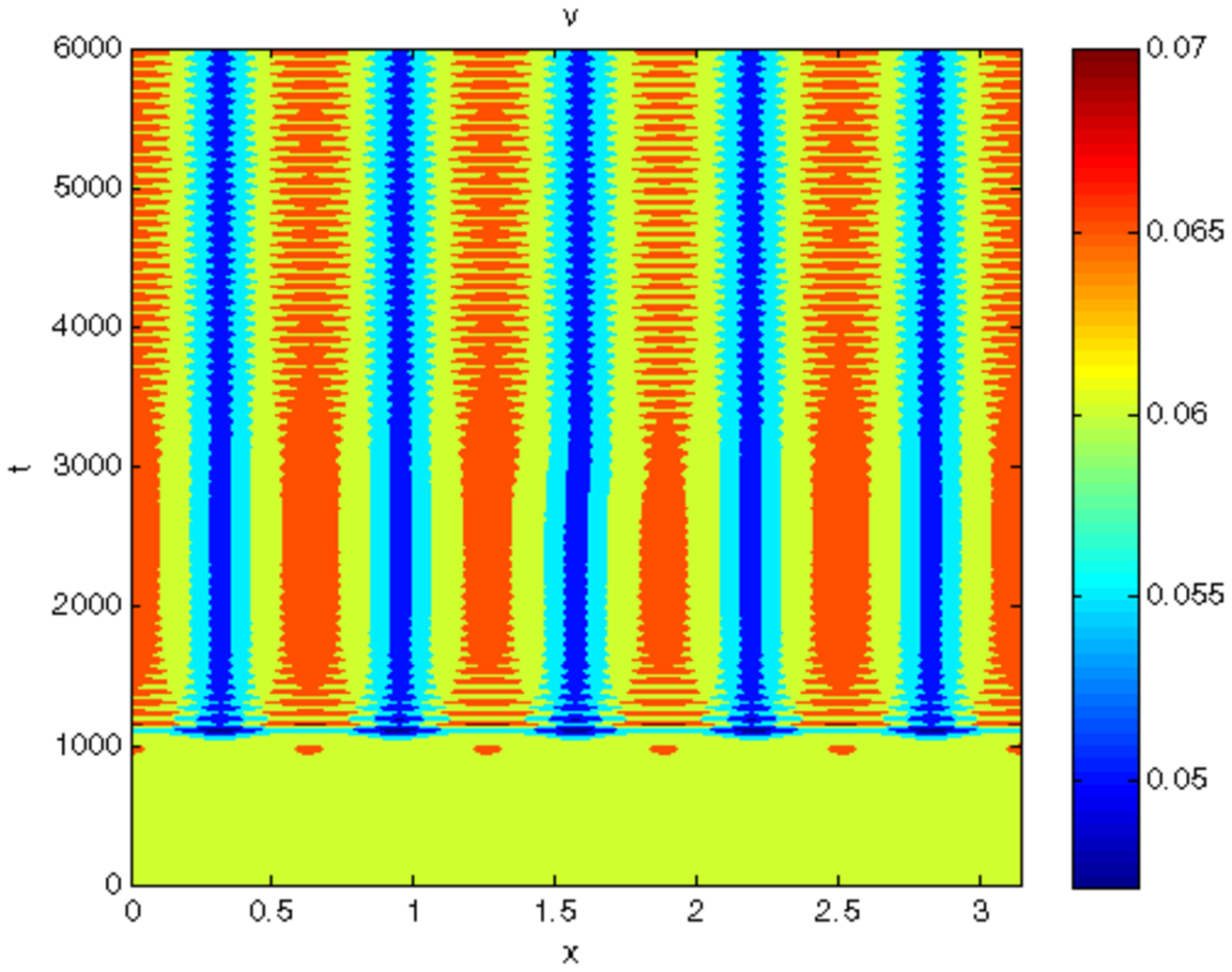}}
\caption{The densities of the species $X, Y$ are shown as contour plots in the x-t plane (1 dimensional in space). The long-time simulation yields stripe patterns.  }
\label{fig:1tt}
\end{figure}
Here we demonstrate Turing patterns that form in 2D. The 2D result in this case were obtained using $R$ package $deSolve $ \cite{Rwork}.The initial condition used is a small perturbation around the positive homogeneous steady state given as 
\begin{align*}
X=X^{*} + \epsilon_1  \cos^2(10x)\cos^2(10y),\\
Y=Y^{*}  + \epsilon_2 \cos^2(10x)\cos^2(10y),\\
\end{align*}
whereas $ \epsilon_i=0.01$ $\forall i$ and $\Omega=[0,L_x]\times [0,L_y]$, with $L_x=L_y=\pi$. 
All numerical simulations are carried out over a $300 \times 300$ lattice with time-step $\Delta t = 0.1$ and spatial steps $\Delta x = \Delta y = 0.01$.
\begin{figure}[!htb]
\begin{center}
\subfigure{
 \includegraphics[scale=.3]{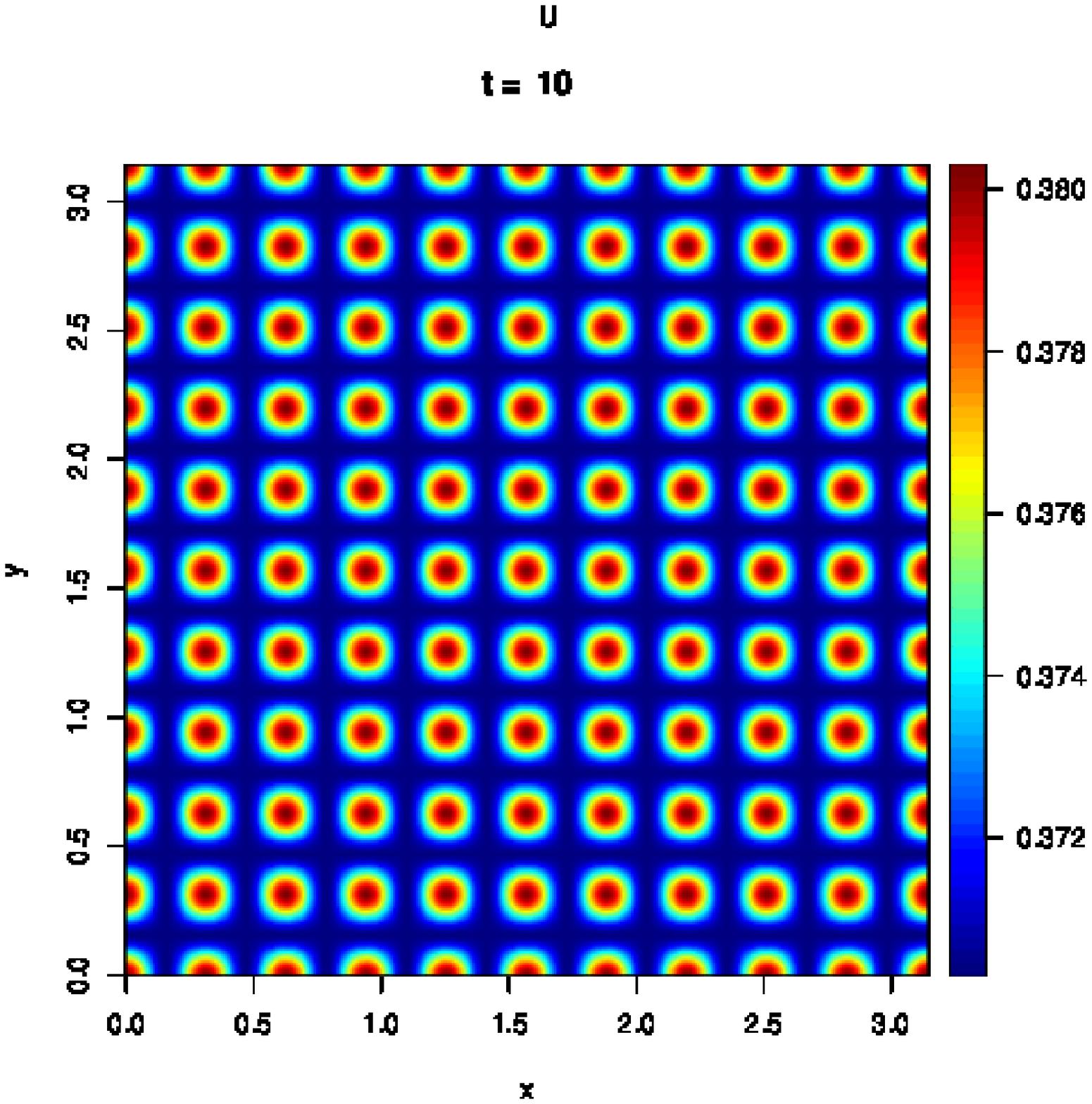}}
 \subfigure{
 \includegraphics[scale=.3]{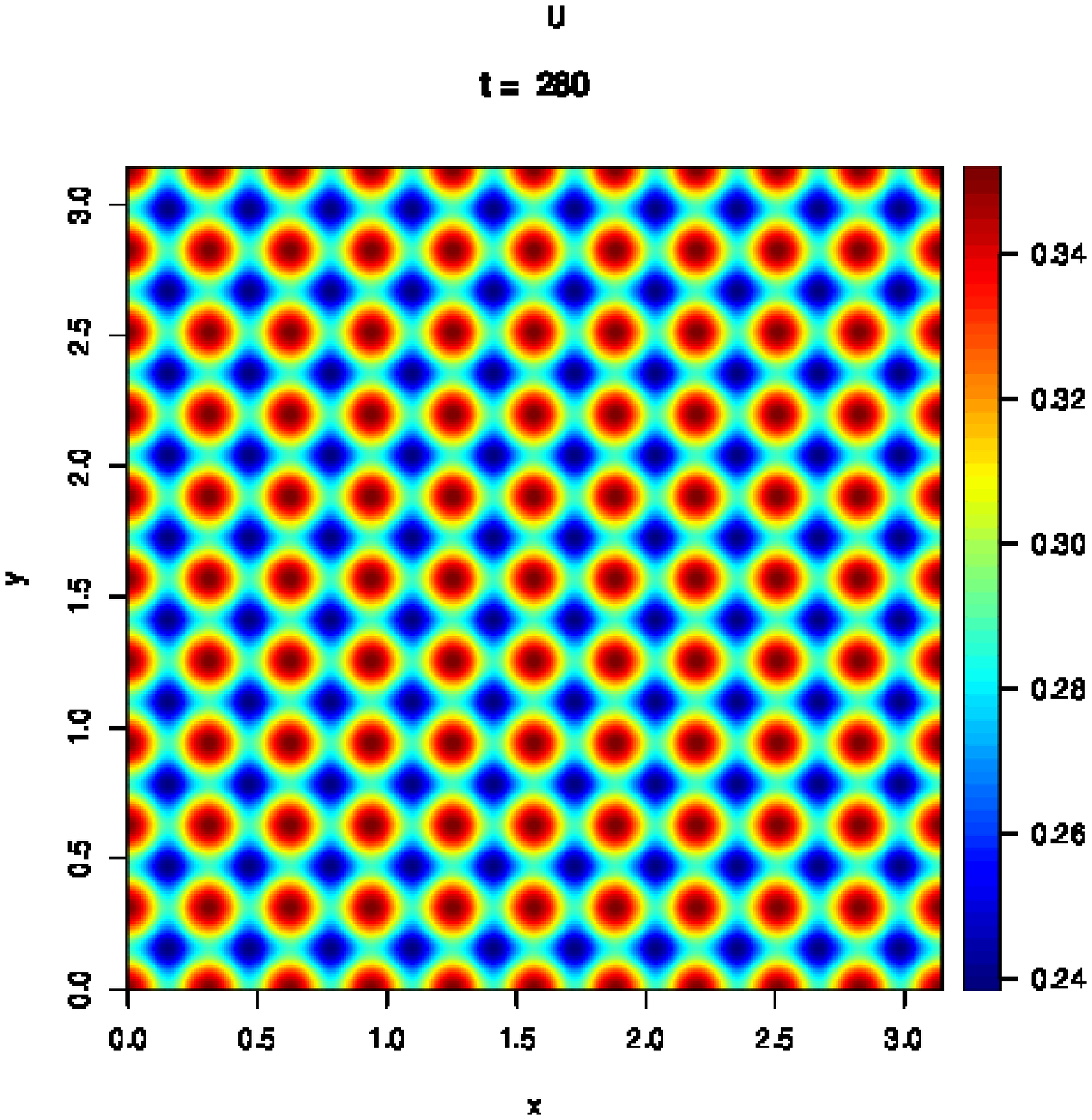}}
  \subfigure{
 \includegraphics[scale=.3]{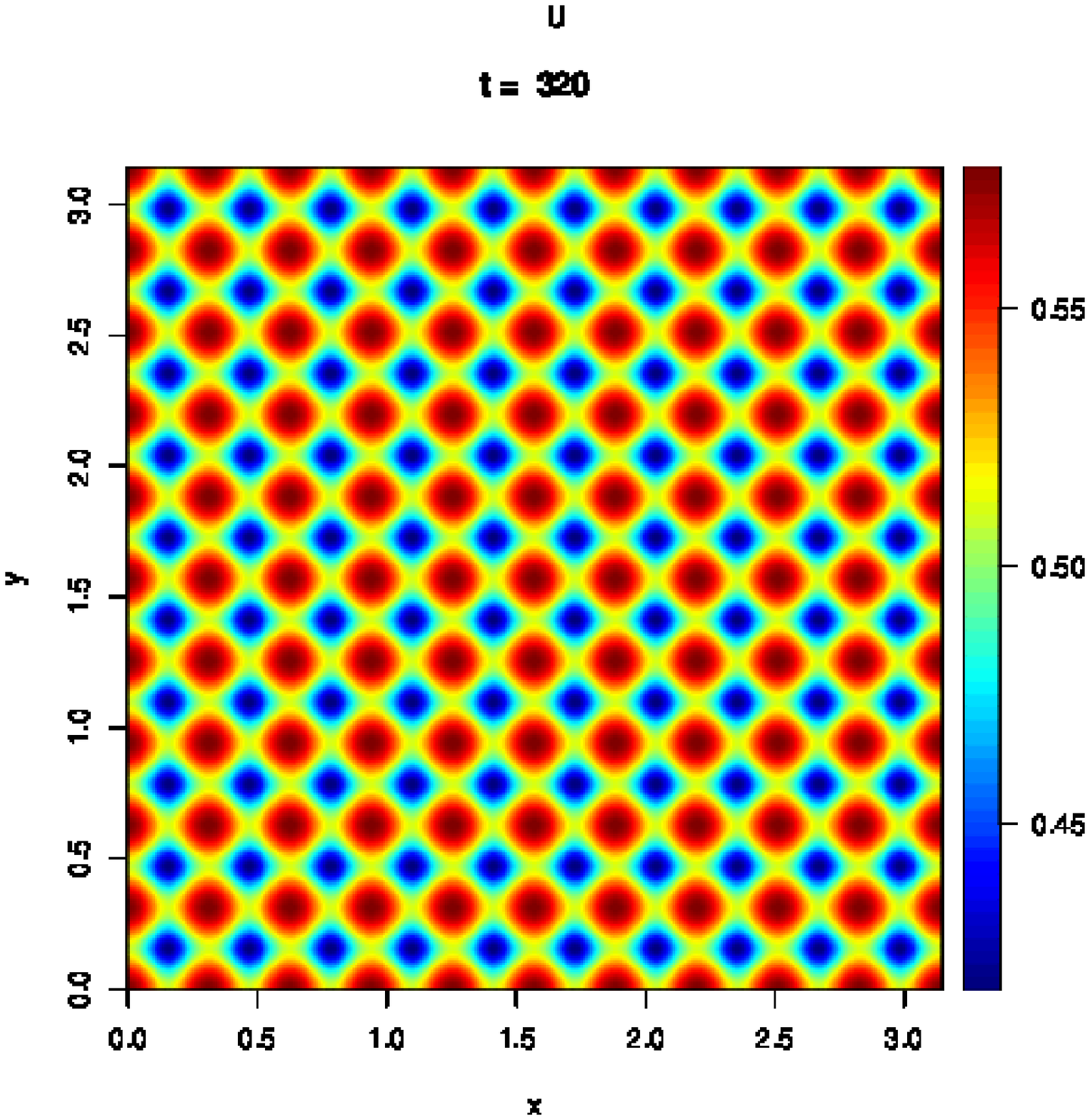}}
   \subfigure{
 \includegraphics[scale=.3]{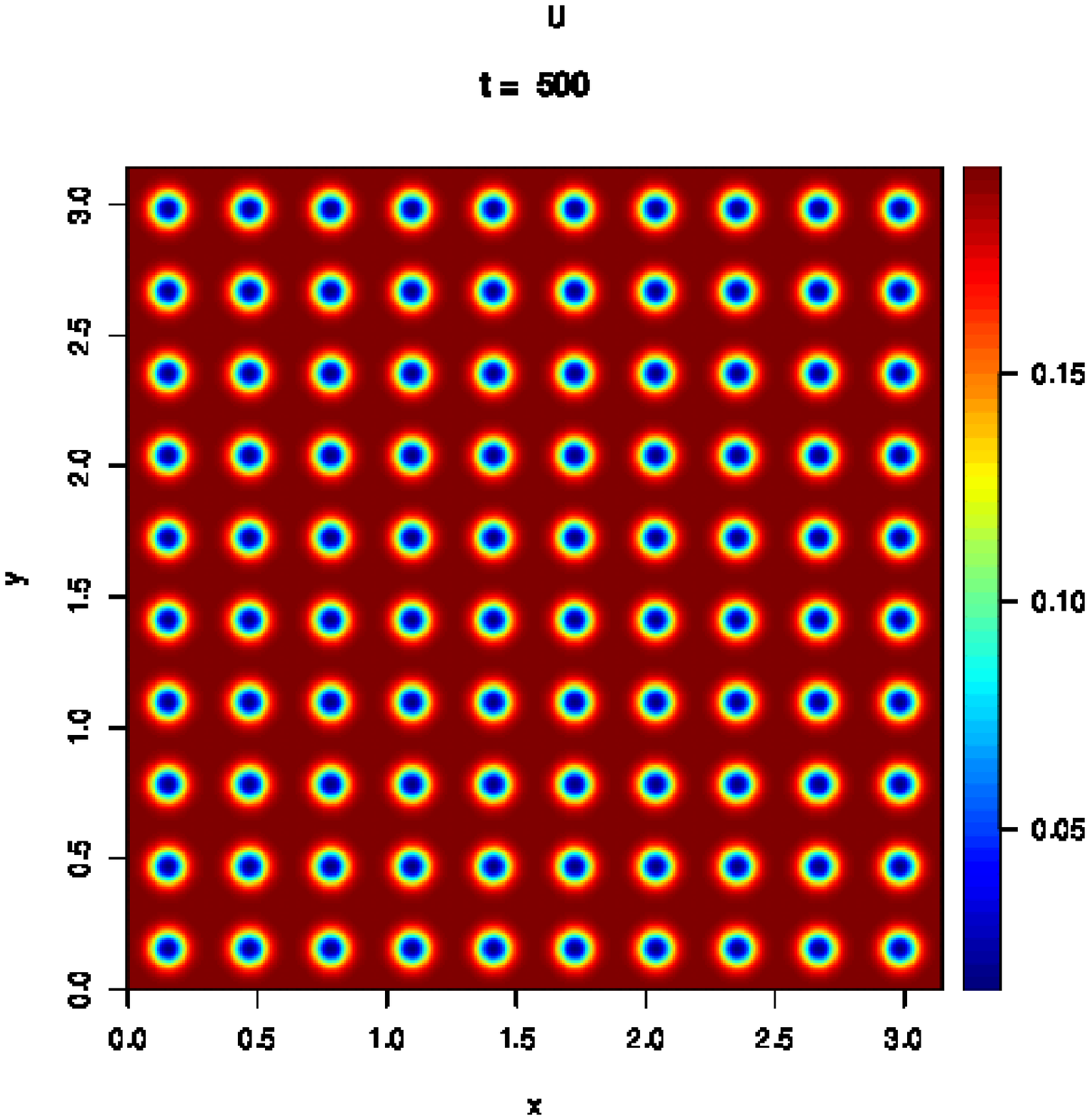}}
\end{center}
 \caption{Snapshots of the time evolution of the prey in 2 dimensions at different times. (a) t=10, (b) t=280, (c) t=320, (d) t=500.}
 \label{fig:2t}
\end{figure}


\begin{figure}[htb]
{
\includegraphics[width =2.5in]{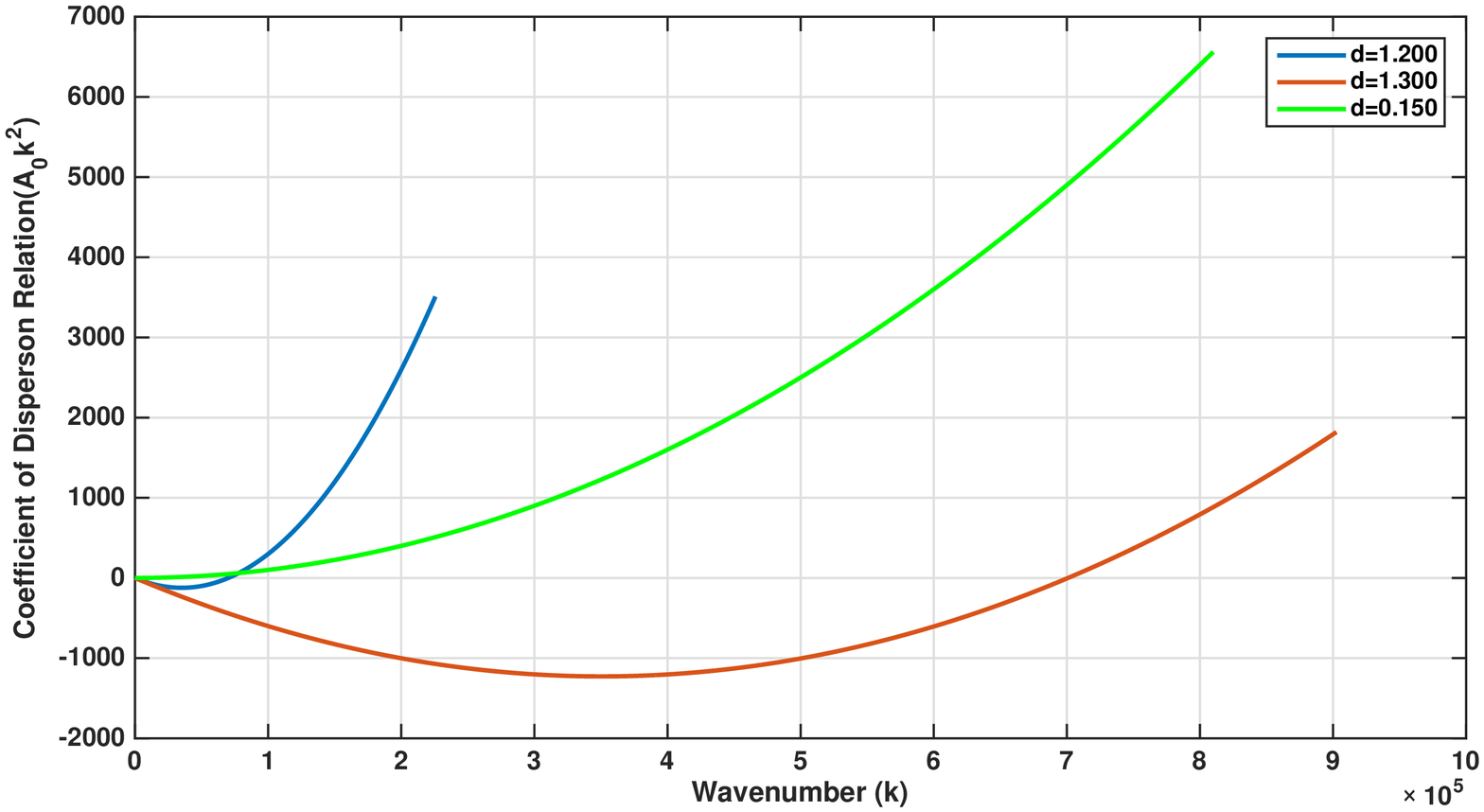}}
\caption{The dispersion plot in the environmental protection parameter $d$ is shown. The plot shows that increasing $d$ enhances the Turing instability.}
\label{1t}
\end{figure}

In Fig.\eqref{1t}, we show the behaviour of $d$ on Turing patterns; whether $d$ can induce Turing patterns. Clearly as seen in Fig.\eqref{1t}, increasing $d$ from $0.15$ to $1.3$ can induce Turing patterns. This is achieved when $A_0(k^2)<0$ for some value of $k^2$. Hence from Fig.\eqref{1t}, $d$ can induce Turing patterns.

\section{Stability Analysis and the Existence of Hopf-bifurcation in delayed diffusive model system}
\label{sec:3}

Upon introducing time delay the dynamics of the diffusive system \eqref{eq:1di}-\eqref{eq:1di2} is governed by the following
systems of equations,
\begin{equation}\label{eq:3.1}
\frac{\partial X}{\partial t}=d_1 \Delta X + X\Bigg[r\left(1-\frac{X}{K}\right)-\frac{\omega  Y}{D+dX+Y}\Bigg],
\end{equation}

\begin{equation}\label{eq:3.1a}
\frac{\partial Y}{\partial t}=d_2 \Delta Y + Y\Bigg[cY-\frac{\omega_1 Y(t-\tau)}{X(t-\tau)+D_1}\Bigg].
\end{equation}

By linearizing system \eqref{eq:3.1}-\eqref{eq:3.1a}, we can rewrite the system \eqref{eq:3.1}-\eqref{eq:3.1a} as follows:

\begin{equation}\label{eq:3.2}
\frac{\partial x}{\partial t}=d \Delta x + L(x(t),x(t-\tau))+N(x(t),x(t-\tau))
\end{equation}
where
$$x(t)=\left(
            \begin{array}{c}
             {\displaystyle{X(t)}}\\
             {\displaystyle{Y(t)}}\\
            \end{array}
     \right),
		d=\left(
            \begin{array}{cc}
             {\displaystyle{d_1}} & {\displaystyle{0}}\\ \\
             {\displaystyle{0}} & {\displaystyle{d_2}}\\
            \end{array}
     \right),$$
		\\
		$$N=\left(
            \begin{array}{c}
             {\displaystyle{-\frac{r}{K}X^2-\frac{\omega XY}{(D+dX+Y)}}}\\
             {\displaystyle{cY^2-\frac{ \omega_1 YY(t-\tau)}{(X(t-\tau)+D_1)}}}\\
            \end{array}
						\right),$$
\\
$$L=\left(
      \begin{array}{cc}
      {\displaystyle{a_{11}}} & {\displaystyle{a_{12}}} \\ \\
      {\displaystyle{0}} & {\displaystyle{cY^*}} \\
    \end{array}
    \right)
					\left(
         \begin{array}{c}
         {\displaystyle{X(t)}}\\
         {\displaystyle{Y(t)}}\\
         \end{array}
		     \right)
		   + \left(
       \begin{array}{cc}
       {\displaystyle{0}} & {\displaystyle{0}}\\ \\
     {\displaystyle{b_{21}}} & {\displaystyle{b_{22}}}\\
    \end{array}
    \right)
				\left(
            \begin{array}{c}
             {\displaystyle{X(t-\tau)}}\\
             {\displaystyle{Y(t-\tau)}}\\
            \end{array}
     \right),$$
		where
		\begin{align*}
		a_{11} &= -\frac{rX^*}{K}+\frac{\omega d Y^*X^*}{(D+dX^*+Y^*)^2},\\
		a_{12} &= -\frac{\omega X^*(D+dX^*)}{(D+dX^*+Y^*)^2},\\
		b_{21} &= \frac{\omega_1 {Y^*}^2}{(X^*+D_1)^2},\\
		b_{22} &= -\frac{\omega_1 Y^*}{(X^*+D_1)}.
		\end{align*}
		So, we can get the characteristic equation of system \eqref{eq:3.2} as follows:
		$$\Bigg|\begin{array}{cc}
		{\displaystyle{\lambda + d_1k^2-a_{11}}} & {\displaystyle{-a_{12}}}\\ \\
		{\displaystyle{-b_{21}e^{-\lambda \tau}}} & {\displaystyle{\lambda + d_2k^2-cY-b_{22}e^{-\lambda \tau}}}\\
		\end{array}\Bigg|=0$$

This is equivalent to the following equation
\begin{eqnarray}\label{eq:3.3}
U_k(\lambda,\tau)=\lambda^2+A_1(k) \lambda +A_0(k) +e^{-\lambda \tau}(B_1(k) \lambda + B_0(k)) = 0
\end{eqnarray}

for $k \in N_0 ={0,1,2,...}$, where,
\begin{align*}
A_1(k)&=d_1 k^2+d_2 k^2-\frac{\omega dY^*X^*}{(D+dX^*+Y^*)^2}+\frac{rX^*}{K},\\
&-cY^*,\\
A_0(k)&=d_1 k^2 d_2 k^2- \frac{d_2 k^2 \omega dY^*X^*}{(D+dX^*+Y^*)^2}+\frac{d_2 k^2 rX^*}{K}\\
&-d_1k^2cY^*+\frac{cY^*\omega dY^*X^*}{(D+dX^*+Y^*)^2}-\frac{cY^*rX^*}{K},\\
B_1(k)&=\frac{\omega_1 Y^*}{X^*+D_1},\\
B_0(k)&=\frac{d_1k^2 \omega_1 Y^*}{X^*+D_1}-\frac{\omega d Y^*X^*\omega_1Y^*}{(D+dX^*+Y^*)^2(X^*+D_1)}\\
&+\frac{\omega_1Y^*rX^*}{K(X^*+D_1)}+\frac{\omega \omega_1X^*{Y^*}^2(D+dX^*)}{(D+dX^*+Y^*)^2(X^*+D_1)^2}.\\
\end{align*}

{\it Case I.} When $\tau=0$, the associated characteristic equation \eqref{eq:3.3} becomes

\begin{align}\label{eq:3.4}
U_k(\lambda,0)=\lambda^2+(A_1(k)+B_1(k))\lambda+(A_0(k)+B_0(k)),
\end{align}
for any $k \in N_0$. By the Routh-Hurwitz criterion, we know that the roots of Eq. \eqref{eq:3.3} have the negative real parts if $A_1(k)+B_1(k) > 0$ and $A_0(k)+B_0(k) > 0$.

{\it Case II.} When $\tau \neq 0$, suppose  $\pm i\omega (\omega>0)$ are the roots of Eq. \eqref{eq:3.3}  for $k \in N_0$. Substituting $\pm i\omega$ into Eq. \eqref{eq:3.3}  and separating real and imaginary parts, we have
$$-\omega^2+A_0(k)=-\omega B_1(k) \mbox{sin}\omega \tau - B_0(k) \mbox{cos} \omega \tau$$
$$\omega A_1(k)=-\omega B_1(k) \mbox{cos}\omega \tau+B_0(k) \mbox{sin}\omega \tau$$
which leads to
\begin{equation}\label{eq:3.5}
\omega^4+({A_1}^2(k)-{B_1}^2(k)-2A_0(k)) \omega^2 + {A_0}^2(k)-{B_0}^2(k)=0
\end{equation}
This Eq.\eqref{eq:3.5} has a unique positive real root\\
$$\omega_0=\Bigg(\frac{-Q + \sqrt{{Q^2-4({A_0}^2(k)-{B_0}^2(k))}}}{2}\Bigg)^{\frac{1}{2}}$$
where $Q=({A_1}^2(k)-{B_1}^2(k)-2A_0(k))$
such that $Q^2>4({A_0}^2(k)-{B_0}^2(k))$ and $Q<0$ and Eq.\eqref{eq:3.3}  has a pair of pure imaginary roots $\pm i\omega$ when
\begin{align*}
\tau=&{\tau_k}^j={\tau_k}^0+\frac{2j\pi}{\omega_0}, \quad j=0,1,2,...,\\
\tau^*&={\tau_k}^0\\
&=\frac{1}{\omega_0}\mbox{arccos}\Bigg(\frac{{B_0}(k)({\omega_0}^2-{A_0}(k))-{\omega_0}^2{B_1}(k){A_1}(k)}{{\omega_0}^2 {B_1}^2(k)+{B_0}^2(k)}\Bigg). \\
\end{align*}
Taking derivative of Eq. \eqref{eq:3.3}  with respect to $\tau$ , we have\\
\begin{align*}
Re\Bigg(\frac{d\lambda(k)}{d\tau}\Bigg)^{-1}\Bigg|_{\tau={\tau_k}^j}=\frac{\omega U\mbox{sin}\omega\tau+\omega^2 V\mbox{cos}\omega\tau-{B_1}^2(k)\omega^2}{{B_1}^2(k)\omega^4+{B_0}^2(k)\omega^2},
\end{align*}
where\\
\begin{align*}
U&=(A_1(k)B_0(k)+2B_1(k)\omega^2),\\
V&=(2B_0(k)-A_1(k)B_1(k)).
\end{align*}
Therefore,\\
$$Re\Bigg(\frac{d\lambda(k)}{d\tau}\Bigg)^{-1}\Bigg|_{\tau={\tau_k}^j}>0$$
if $2B_0(k)>A_1(k)B_1(k)$. Hence, transversality condition holds.

\section{Direction and Stability of spatial Hopf-bifurcation}\label{sec:4}

In this section, we describe the direction, stability and period of the
bifurcating periodic solutions of system \eqref{eq:3.1}-\eqref{eq:3.1a} from the steady state.
The method we use is based on the normal form theory and center
manifold theorem as given by Hassard et al. \cite{BD81}.

Let $x_{1}=X-X^*,~ x_{2}=Y-Y^*$ and $ {x{_{i}}}(t)=x_{i}(\tau t)$ for $i=1, 2$, $\alpha=\tau-\tau^*$ ; the system \eqref{eq:3.1}-\eqref{eq:3.1a} then transforms to partial functional differential equations as
\begin{equation}\label{eq:4.1}
{\dot x}(t)=\tau^* d \Delta x +\tau^* L(x(t))+[\alpha d \Delta x + \alpha L(x(t))+(\alpha+\tau^*)N(x(t))],
\end{equation}

where
$$x(t)=\left(
            \begin{array}{c}
             {\displaystyle{X(t)}}\\
             {\displaystyle{Y(t)}}\\
            \end{array}
     \right),
		d=\left(
            \begin{array}{cc}
             {\displaystyle{d_1}} & {\displaystyle{0}}\\ \\
             {\displaystyle{0}} & {\displaystyle{d_2}}\\
            \end{array}
     \right),$$
		\\
		$$N=\left(
            \begin{array}{c}
             {\displaystyle{-\frac{r}{K}X^2-\frac{\omega XY}{(D+dX+Y)}}}\\
             {\displaystyle{cY^2-\frac{ \omega_1 YY(t-1)}{(X(t-1)+D_1)}}}\\
            \end{array}
						\right),$$

$$L=\left(
      \begin{array}{cc}
      {\displaystyle{a_{11}}} & {\displaystyle{a_{12}}} \\ \\
      {\displaystyle{0}} & {\displaystyle{cY^*}} \\
    \end{array}
    \right)
					\left(
         \begin{array}{c}
         {\displaystyle{X(t)}}\\
         {\displaystyle{Y(t)}}\\
         \end{array}
		     \right)
		   + \left(
       \begin{array}{cc}
       {\displaystyle{0}} & {\displaystyle{0}}\\ \\
     {\displaystyle{b_{21}}} & {\displaystyle{b_{22}}}\\
    \end{array}
    \right)
				\left(
            \begin{array}{c}
             {\displaystyle{X(t-1)}}\\
             {\displaystyle{Y(t-1)}}\\
            \end{array}
     \right).$$
For convenience, we rewrite system \eqref{eq:4.1} in the following form
\begin{equation}\label{eq:4.2}
{\dot x}(t)=\tau^* d \Delta x + L_*(x(t))+N_*(x(t)),
\end{equation}
where
\begin{align*}
L_*(x(t))&=\tau^* L(x(t)),\\
N_*(x(t))&=\alpha d \Delta x + \alpha L(x(t))+(\alpha+\tau^*)N(x(t))
\end{align*}

And associating with the linearizing system of system \eqref{eq:4.2} becomes
\begin{equation}\label{eq:4.3}
{\dot x}(t)=\tau^* d \Delta x + L_*(x(t))
\end{equation}
From the above section, we know that the system \eqref{eq:4.3} has a pair of imaginary eigenvalues $\pm i\omega_0\tau^*$. Consider the ordinary differential equation
\begin{equation}\label{eq:4.4}
{\dot x}(t)=-\tau^*k^2dx(t)+L_*(x(t)).
\end{equation}
By the Reisz Representation theorem, there exists a function $\eta(\theta,\tau^*)$ whose components are of bounded variation for $\theta~\in~[-1,0]$
such that
\begin{equation}\label{eq:4.5}
L_{*}(\phi)-\tau^*k^2d\phi (0)~=~
\int_{-1}^{0}~\phi(\theta)d\eta(\theta),\quad\mbox{for} ~~\phi ~\in~ C.
\end{equation}
In fact, we can choose\\
\begin{align}\label{eq:4.6}
\eta(\theta,\tau^*) =\left\{\begin{array}{lll}\tau^*\left(
      \begin{array}{cc}
      {\displaystyle{a_{11}-d_1k^2}} & {\displaystyle{a_{12}}} \\ \\
      {\displaystyle{0}} & {\displaystyle{cY^*-d_2k^2}} \\
    \end{array}
    \right),& \theta=0,\\
\\ 0,&  \theta \in (-1,0), \\
\\ -\tau^*\left(
       \begin{array}{cc}
       {\displaystyle{0}} & {\displaystyle{0}}\\ \\
     {\displaystyle{b_{21}}} & {\displaystyle{b_{22}}}\\
    \end{array}
    \right),& \theta=-1.\end{array}\right.
    \end{align}

For $\phi \in C^{1}([-1,0],R^{2})$ and $\psi \in C^{1}([0,1],(R^2)^*)$, define $A$ and $A^*$ as
$$ A\phi(\theta) =\left\{\begin{array}{ll}\displaystyle{\frac{d \phi(\theta)}{d \theta} },& \theta \in [-1,0),\\
\\ \int_{-1}^{0}~\phi(\theta)d\eta(\theta),& \theta  = 0, \end{array}\right.$$
$$A^{*} \psi (s) = 
\left\{\begin{array}{ll}-\displaystyle{\frac{d \psi (s)}{d s}},& s \in (0,1], \\
 \\ \int_{-1}^{0}~ \psi (-\xi)d {\eta} (\xi),& s=0.
  \end{array}\right. $$
Then $A$ and $A^*$ are a pair of adjoint operators under the following bilinear inner product
\begin{equation}\label{eq:4.7}
\langle \psi ,\phi  \rangle = \bar{\psi} (0) \phi (0)-
\int_{\theta = -1}^{0} \int_{\xi = 0}^{\theta} \bar{\psi} (\xi -\theta) d \eta (\theta) \phi (\xi) d \xi .
\end{equation}

Since $\pm i\omega_{0}
\tau^{*}$ are the eigenvalues of $A$ and $A^*$ respectively.
We need to compute eigenvectors of $A$ and $A^*$ crresponding to
 $+i\omega_{0}\tau^{*}$ and $-i\omega_{0}\tau^{*}$ respectively.\\
 \noindent Suppose $q(\theta)=(\alpha_1,\alpha_2)^{T}e^{i\omega_{0}\tau^{*}\theta}$
 be the eigenvector of $A$ corresponding to eigenvalue $i\omega_{0}\tau^{*}$ then\\
 \begin{equation}\label{eq:4.8}
 Aq(\theta)=i\omega_{0}\tau^{*}q(\theta),
 \end{equation}
 which, for $\theta=0$, gives
\begin{align}\label{eq:4.9}
\tau^{*}\left(
 \begin{array}{cc}
  {i\omega_0-a_{11}-d_1k^2} & {-a_{12}}\\ \\
     {-b_{21} e^{-i\omega_0 \tau^*}} & {i\omega_0-cY^*-d_2k^2-b_{22} e^{-i\omega_0 \tau^*}}\\
  \end{array}
  \right)q(0)
	=\left(
    \begin{array}{cc}
    0 \\
    0\\
\end{array}
\right).
\end{align}

By choosing $\alpha_1=1$, we can write the above equation as follows
\begin{align}\label{eq:4.10}
\left(
\begin{array}{cc}
   {i\omega_0 -a_{11}-d_1k^2} & {-a_{12}}\\ \\
   {\-b_{21}e^{-i\omega_0 \tau^*}} & {i\omega_0-cY^*-d_2k^2-b_{22}e^{-i\omega_0 \tau^*}}\\
  \end{array}
  \right)
  \left(
  \begin{array}{c}
  {\displaystyle{1}}\\
  {\displaystyle{\alpha_2}}\\
  \end{array}
  \right)=\left(
  \begin{array}{c}
  {\displaystyle{0}}\\
  {\displaystyle{0}}\\
  \end{array}
  \right).
  \end{align}
Thus, we obtain
$$\alpha_2=\frac{i\omega_0-a_{11}-d_1k^2}{a_{12}}.$$

Similarly, let $q^*(s)=N(\alpha_{1}^*,\alpha_{2}^*)e^{i\omega_{0}\tau^{*}s}$ be the eigenvectors
of $A^*$ corresponding to eigenvalue $-i\omega_{0}\tau^{*}$, we have
\begin{equation}\label{eq:4.11}
A^*q^*(s)=-i\omega_{0}\tau^{*}q^*(s),
\end{equation}
where $\alpha_{1}^*=1,\alpha_{2}^*=-\frac{i\omega_0+a_{11}+d_2k^2}{b_{21} e^{-i\omega_0 \tau^*}} $.

Using the normalization condition, i.e., $\langle q^*(s),q(\theta)\rangle=1$, we need to choose a suitable value of $N$.
From Eq. \eqref{eq:4.11}, we have

\begin{align*}
\langle q^*(s) ,q(\theta)  \rangle
&= \bar N (1,\bar {{\alpha_2}^*})(1,\alpha_2)^T
-\int_{\theta = -1}^{0} \int_{\xi = 0}^{\theta} \bar N (1,\bar {{\alpha_2}^*}) e^{-i\omega_0 \tau^* (\xi -
\theta)} d \eta (\theta) (1,\alpha_2)^T  e^{i\omega_0 \tau^* \xi} d \xi ,\\
&=\bar N \{ 1 + \alpha_2 \bar{\alpha_2^*} + \tau^*  (b_1 +\alpha_2b_2)\bar{\alpha_2^*}e^{-i\omega_0 \tau^*} \},
\end{align*}
which gives
$$\bar N=\frac{1}{1 + \alpha_2 \bar{\alpha_2^*} + \tau^*(b_1+\alpha_2b_2)\bar{\alpha_2^*}e^{-i\omega_0 \tau^*}}.$$
Thus we can choose $N$ as
$$\frac{1}{1 + \bar{\alpha_2} {\alpha_2^*} + \tau^*(b_1+\bar{\alpha_2}b_2){\alpha_2^*}e^{i\omega_0 \tau^*}}$$
such that
$$\langle q^*(s) ,q(\theta)  \rangle =1, \langle q^*(s) ,\bar {q}(\theta)  \rangle=0.$$
In this case, $\langle \Psi,\Phi \rangle=I$, where $\Phi=(q(\theta),\bar{q}(\theta)), \Psi=(q^*(s),{\bar{q}}^*(s))^T$ and $I$ is the unit matrix.\\
In the following, we mainly use the theory from Wu \cite{WU96}. Let $P=\mbox{span}\{q(\theta),\bar{q}(\theta)\}$ and $P^*=\mbox{span}\{q^*(s),{\bar{q}}^*(s)\}$, then $P$ is the center subspace of Eq. \eqref{eq:4.3} and $P^*$ is the adjoint subspace.\\
In addition, $f_k=(\beta_k^1,\beta_k^2)$ and $c.f_k=c_1\beta_k^1+c_2\beta_k^2$, for $c=(c_1,c_2)^T\in C^2$. The center subspace of system \eqref{eq:4.3} is given by $P_{CN}\Im$, where
\begin{equation}\label{eq:4.12}
P_{CN}\Im (\phi)=\Phi(\Psi,\langle \phi,f_k\rangle).f_k,~~~\phi \in \Im,
\end{equation}
and $\Im=P_{CN}\Im\oplus P_S \Im$, where $P_S \Im$ denotes the complement subspace of $P_{CN} \Im$ in $\Im$. Let $A_{\tau}^*$ be the infinitesimal generator induced by the solution of system \eqref{eq:4.3}. Then the system \eqref{eq:4.1} can be rewritten in the abstract form as
$$\dot{x}(t)=A_{\tau^*}x(t)+R(\alpha,x(t))$$
where
$$R(\alpha,x(t))=\left\{\begin{array}{ll}0,& \theta \in [-1,0), \\
 \\ N(\alpha,x(t)),& \theta=0. \end{array}\right. $$

Using the decomposition $\Im=P_{CN}\Im\oplus P_S \Im$ and Eq.\eqref{eq:4.12}, the solution of system \eqref{eq:4.2} can be written as\\
$$x(t)=\Phi \left(
    \begin{array}{c}
  {\displaystyle{x_1(t)}} \\
  {\displaystyle{x_2(t)}}\\
\end{array}
\right).f_k+h(x_1,x_2,\alpha),$$
where $(x_1(t),x_2(t))^T=(\psi,\langle x(t),f_k \rangle)$, and $h(x_1,x_2,\alpha) \in P_S\Im,h(0,0,0)=0, dh(0,0,0)=0$. In particuiar, the solution of system \eqref{eq:4.2} on the center manifold is given by\\
\begin{align}\label{eq:4.13}
x(t)=\Phi \left(
    \begin{array}{c}
  {\displaystyle{x_1(t)}} \\
  {\displaystyle{x_2(t)}}\\
\end{array}
\right).f_k+h(x_1,x_2,0).
\end{align}


Let $z=x_1-ix_2, \psi(0)=(\Psi_1(0),\Psi_2(0))^T$, and notice that $p_1=\Phi_1+i\Phi_2$, then\\
$$\Phi \left(
    \begin{array}{c}
  {\displaystyle{x_1(t)}} \\
  {\displaystyle{x_2(t)}}\\
\end{array}
\right).f_k=(\Phi_1,\Phi_2) \left(
    \begin{array}{c}
  {\displaystyle{\frac{z+\bar{z}}{2}}} \\
  {\displaystyle{\frac{i(z+\bar{z})}{2}}}\\
\end{array}
\right).f_k\\
=\frac{1}{2}(p_1z+{\bar{p}}_1 \bar z).f_k.$$\\
So system \eqref{eq:4.13} can be transformed into
\begin{equation}\label{eq:4.14}
x(t)=\frac{1}{2}(p_1z+{\bar{p}}_1 \bar z).f_k+w(z, \bar z),
\end{equation}
where
\begin{align*}
w(z,\bar z) &= h(\frac{z+\bar z}{2},\frac{i(z-\bar z)}{2},0)\\
&= w_{20}(\theta)\frac{z^2}{2}+w_{11}(\theta)z \bar z + w_{02}(\theta)\frac{{\bar z}^2}{2}\\
&+w_{21}(\theta)\frac{z^2 \bar z}{2}+\ldots~.
\end{align*}
Moreover, $z$ satisfies
\begin{equation}\label{eq:4.15}
\dot z=i\omega_0 \tau^* z+g(z,\bar z),
\end{equation}
where
\begin{align}\label{eq:4.16}
g(z,\bar z)&=(\Psi_1(0),\Psi_2(0)) \langle N_*(x(t),0),f_k \rangle \nonumber \\
&=g_{20}\frac{z^2}{2}+g_{11}z \bar z + g_{02}\frac{{\bar z}^2}{2}+g_{21}\frac{z^2 \bar z}{2}+\ldots~.
\end{align}

From system \eqref{eq:4.2} and \eqref{eq:4.13}, it follows that
\begin{align*}
x_{1t}(0)&=z + \bar{z}
+W_{20}^{1} (0) \frac{z^2}{2} + W_{11}^{(1)} (0) z \bar{z}\\
&+W_{02}^{(1)}(0) \frac{{\bar{z}}^2}{2} + \ldots ~,\\
x_{2t}(0) &=\alpha_2 z + \bar{\alpha_2} \bar{z} +W_{20}^{(2)}
(0) \frac{z^2}{2} + W_{11}^{(2)} (0) z \bar{z}\\
&+W_{02}^{(2)} (0)\frac{{\bar{z}}^2}{2} +  \ldots ~, \\
x_{1t}(-1) &= z e^{- i\omega_0
\tau^{*}}+  \bar{z}e^{i\omega_0
\tau^{*}} +W_{20}^{(1)} (-1) \frac{z^2}{2} \\
&+ W_{11}^{(1)} (-1) z
\bar{z} + W_{02}^{(1)} (-1)\frac{{\bar{z}}^2}{2} + \ldots ~, \\
 x_{2t}(-1) &=\alpha_2 z e^{- i\omega_0
\tau^{*}}+ \bar{\alpha_2} \bar{z}e^{i\omega_0
\tau^{*}} +W_{20}^{(2)} (-1) \frac{z^2}{2}\\
&+ W_{11}^{(2)} (-1) z\bar{z} + W_{02}^{(2)} (-1)\frac{{\bar{z}}^2}{2} +\ldots~. 
\end{align*}

Now, from Eq.\eqref{eq:4.16}, we have
\begin{align}\label{eq:4.17}
g(z,\bar z)&=\tau^{*}\bar N(1,\bar{\alpha_2^*})
\left(
                \begin{array}{c}
              {-\frac{r}{K}\phi_{1}^2(0)-\frac{\omega\phi_{1}(0)\phi_{2}(0)}{(D+d\phi_{1}(0)+\phi_{2}(0))}}\nonumber\\
              {c\phi_{2}^2(0)-\frac{ \omega_1 \phi_{2}(0)\phi_{2}(-1)}{(\phi_{1}(-1)+D_{1})}}
              \end{array}
              \right),\nonumber\\
&=\tau^* \bar N\Bigg[\left\lbrace\left(-\frac{r}{K}-\frac{\omega \alpha_2}{D}\right)+\bar{\alpha_2^*}\alpha_2^2\left(c-\frac{ \omega_1 e^{-i\omega_0 \tau^*}}{D_1}\right)\right\rbrace z^2\nonumber\\
&+\Bigg\{ \left(-\frac{r}{K}-\frac{\omega Re(\alpha_2)}{D}\right)+\bar{\alpha_2^*} \alpha_2 \bar{\alpha_2}\left(c-\frac{ \omega_1 cos(\omega_0 \tau^*)}{D_1}\right)\Bigg\} z \bar{z}\nonumber\\
&+\left\lbrace\left(-\frac{r}{K}-\frac{\omega \bar{\alpha_2}}{D}\right)+\bar{\alpha_2^*} \bar{\alpha_2}^2 \left(c-\frac{ \omega_1 e^{i\omega_0 \tau^*}}{D_1}\right)\right\rbrace {\bar z}^2\\
&+\Bigg\{ -\frac{r}{K}(\alpha_2  W_{11}^{(1)} (0)+ \frac{W_{20}^{(1)} (0)}{2})+\bar{\alpha_2^*}c(\frac{\bar{\alpha_2}W_{20}^{(2)} (0)}{2}\nonumber\\
&+\alpha_2 W_{11}^{(2)} (0))+\frac{\bar{\alpha_2^*} \omega_1}{D_1}(\alpha_2 W_{11}^{(2)} (-1)+\frac{\bar{\alpha_2} W_{20}^{(2)} (0)}{2}e^{i \omega_0 \tau^*}\nonumber\\
&+\alpha_2 W_{11}^{(2)} (0) e^{-i \omega_0 \tau^*}+\frac{\bar{\alpha_2}}{2}W_{20}^{(2)} (-1))+\frac{\omega}{D}(W_{11}^{(2)} (0)\nonumber\\
&+\frac{\bar{\alpha_2}W_{20}^{(1)} (0)}{2}+\frac{W_{20}^{(2)} (0)}{2}+\alpha_2W_{11}^{(1)} (0))\Bigg\}{z^2 \bar z}\Bigg].\nonumber
\end{align}

Comparing the coefficient with \eqref{eq:4.20}, we have
\begin{equation}\label{eq:4.18}
g_{20}=2\tau^* \bar N \left(\left(-\frac{r}{K}-\frac{\omega \alpha_2}{D}\right)+\bar{\alpha_2^*}\alpha_2^2\left(c-\frac{ \omega_1 e^{-i\omega_0 \tau^*}}{D_1}\right)\right),
\end{equation}
\begin{equation}\label{eq:4.19}
g_{11}=\tau^* \bar N \Bigg(\left(-\frac{r}{K}-\frac{\omega Re(\alpha_2)}{D}\right)+\bar{\alpha_2^*} \alpha_2 \bar{\alpha_2}\Bigg(c-\frac{ \omega_1 cos(\omega_0 \tau^*)}{D_1}\Bigg)\Bigg),
\end{equation}
\begin{equation}\label{eq:4.20}
~g_{02}=2\tau^* \bar N \left(\left(-\frac{r}{K}-\frac{\omega \bar{\alpha_2}}{D}\right)+\bar{\alpha_2^*} \bar{\alpha_2}^2 \left(c-\frac{ \omega_1 e^{i\omega_0 \tau^*}}{D_1}\right)\right),
\end{equation}
\begin{align}\label{eq:4.21}
g_{21}&=2\tau^* \bar N\Bigg\{-\frac{r}{K}(\alpha_2  W_{11}^{(1)} (0)+ \frac{W_{20}^{(1)} (0)}{2})+\bar{\alpha_2^*}c(\frac{\bar{\alpha_2}W_{20}^{(2)} (0)}{2}\\
&+\alpha_2 W_{11}^{(2)} (0))+\frac{\bar{\alpha_2^*} \omega_1}{D_1}(\alpha_2 W_{11}^{(2)} (-1)+\frac{\bar{\alpha_2} W_{20}^{(2)} (0)}{2}e^{i \omega_0 \tau^*}\nonumber\\
&+\alpha_2 W_{11}^{(2)} (0) e^{-i \omega_0 \tau^*}+\frac{\bar{\alpha_2}W_{20}^{(2)} (-1)}{2})+ \frac{\omega}{D}(W_{11}^{(2)} (0)\nonumber\\
&+\frac{\bar{\alpha_2}W_{20}^{(1)} (0)}{2}+\frac{W_{20}^{(2)} (0)}{2}+\alpha_2W_{11}^{(1)} (0))\Bigg\}.\nonumber
\end{align}
In order to compute $g_{21}$, we need to compute them $W_{20}^{(i)}(\theta)$ and $W_{11}^{(i)}(\theta), i=1,2$.\\
From Eq.\eqref{eq:4.14}, we have
\begin{align*}
\dot{W} &= \dot{x_{t}} - \dot{z}q - \dot{\bar{z}}\bar{q}\\
\end{align*}
\begin{align}\label{eq:4.22}
\dot{W}&= \left\{\begin{array}{ll}
A W - 2 {\mbox Re} \{{\bar{q}}^* (0) f_{0} q(\theta)\}, & \theta \in [-1,0), \\
A W - 2 {\mbox Re} \{{\bar{q}}^* (0) f_{0} q(\theta)\} + f_{0}, & \theta = 0,
\end{array}\right.
\end{align}
\begin{align}\label{eq:4.23}
\dot{W} &= A W + H (z, \bar{z}, \theta),
\end{align}
with\\
\begin{equation}\label{eq:4.24}
H(z,\bar z,\theta)=H_{20}(\theta)\frac{z^2}{2}+H_{11}(\theta) z \bar z+H_{02}(\theta)\frac{z^2}{2}+\ldots~~.
\end{equation}
Also, on $C_0$, using chain rule, we get
\begin{equation}\label{eq:4.25}
 \dot{W}=W_z \dot{z}+W_{\bar z} \dot{\bar z}.
\end{equation}
It follows from Eqs.\eqref{eq:4.14}, \eqref{eq:4.23} and \eqref{eq:4.25}
\begin{equation}\label{eq:4.26}
(A-2i\omega_0 \tau^*)W_{20}=-H_{20}(\theta),
\end{equation}
\begin{equation}\label{eq:4.27}
 AW_{11}=-H_{11}(\theta).
\end{equation}
\\
 Now for $\theta \in [-1,0)$, we have
 \begin{align}\label{eq:4.28}
 H(z,\bar z,\theta)&=-\bar{q^*}(0) f_0 q(\theta)-q^*(0) \bar{f_0} \bar{q}(\theta)\nonumber\\
 &=-g(z,\bar z)q(\theta)-\bar g(z,\bar z)\bar q(\theta),\\
 &=-(g_{20}q(\theta)+{\bar{g}}_{02}\bar q(\theta))\frac{z^2}{2}-(g_{11}q(\theta)\nonumber\\
 &+{\bar g}_{11} {\bar q}(\theta))z \bar z+\ldots ~,\nonumber
 \end{align}
 
 which on comparing the coefficients with \eqref{eq:4.24} gives\\
 \begin{equation}\label{eq:4.29}
  H_{20}(\theta)=-g_{20} q(\theta)-{\bar g}_{02} \bar q (\theta),
  \end{equation}
  \begin{equation}\label{eq:4.30}
  H_{11}(\theta)=-g_{11} q(\theta)-{\bar g}_{11} \bar q (\theta).
  \end{equation}
 \\
From Eqs.\eqref{eq:4.26}, \eqref{eq:4.29} and the definition of $A$, we have\\
\begin{equation}\label{eq:4.31}
{\dot W}_{20}(\theta)=2i\omega_0 \tau^* W_{20}(\theta)+g_{20}q(\theta)+{\bar g}_{02} {\bar q}(\theta).
\end{equation}
Note that $q(\theta)=q(0)e^{i\omega_0 \tau^* \theta}$, we have\\
\begin{equation}\label{eq:4.32}
W_{20}(\theta)=\frac{ig_{20}}{\omega_0 \tau^*} q(0)e^{i\omega_0 \tau^* \theta}.f_k+\frac{i {\bar g}_{20}}{3\omega_0 \tau^*} \bar q (0)e^{-i\omega_0 \tau^* \theta}.f_k
+E_1 e^{2i \omega_0 \tau^* \theta}.
\end{equation}
Similarly, from Eq. \eqref{eq:4.27}, \eqref{eq:4.30} and the definition of $A$, we have\\
\begin{align}\label{eq:4.33}
{\dot W}_{11}(\theta)&=g_{11}q(\theta)+{\bar g}_{11} {\bar q}(\theta),
\end{align}
\begin{align}\label{eq:4.34}
W_{11}(\theta)&=-\frac{ig_{11}}{\omega_0 \tau^*} q(0)e^{i\omega_0 \tau^* \theta}.f_k+\frac{i {\bar g}_{11}}{\omega_0 \tau^*} \bar q (0)e^{-i\omega_0 \tau^* \theta}.f_k+E_2,
\end{align}
where $E_1=\big(E^{(1)}_1,E^{(2)}_1\big)$ and $E_2=\big(E^{(1)}_2,E^{(2)}_2\big)\in R^2$ are constant vectors, to be determined.\\
It follows from the definition of $A$ and connecting Eqs. \eqref{eq:4.14},  \eqref{eq:4.32} and  \eqref{eq:4.34}, we have\\
\\
\begin{align}\label{eq:4.35}
&2i\omega_0 \tau^*\Bigg(\frac{ig_{20}}{\omega_0 \tau^*} q(0)e^{i\omega_0 \tau^* \theta}.f_k+\frac{i {\bar g}_{20}}{3\omega_0 \tau^*} \bar q (0)e^{-i\omega_0 \tau^* \theta}.f_k+E_1 e^{2i \omega_0 \tau^* \theta}\Bigg)\nonumber\\
&-\Delta\Bigg(\frac{ig_{20}}{\omega_0 \tau^*} q(0)e^{i\omega_0 \tau^* \theta}.f_k+\frac{i {\bar g}_{20}}{3\omega_0 \tau^*} \bar q (0)e^{-i\omega_0 \tau^* \theta}.f_k+E_1 e^{2i \omega_0 \tau^* \theta}\Bigg)\nonumber\\
&-L_*\Bigg(\frac{ig_{20}}{\omega_0 \tau^*} q(0)e^{i\omega_0 \tau^* \theta}.f_k+\frac{i {\bar g}_{20}}{3\omega_0 \tau^*} \bar q (0)e^{-i\omega_0 \tau^* \theta}.f_k+E_1 e^{2i \omega_0 \tau^* \theta}\Bigg)\\
&=- g_{20} q(0) - \bar{g}_{02} \bar{q} (0)\nonumber\\
&+ 2 \tau^{*}
    \left(
    \begin{array}{c}
  {\displaystyle{-\frac{r}{K}-\frac{\omega \alpha_2}{D}}} \\
  {\displaystyle{\alpha_2^2\left(c-\frac{ \omega_1 e^{-i\omega_0 \tau^*}}{D_1}\right)}}\\
\end{array}
\right),\nonumber
\end{align}

and
\begin{align}\label{eq:4.36}
&-\Delta\Bigg(-\frac{ig_{11}}{\omega_0 \tau^*} q(0)e^{i\omega_0 \tau^* \theta}.f_k+\frac{i {\bar g}_{11}}{\omega_0 \tau^*} \bar q (0)e^{-i\omega_0 \tau^* \theta}.f_k+E_2\Bigg)\nonumber\\
&-L_*\Bigg(-\frac{ig_{11}}{\omega_0 \tau^*} q(0)e^{i\omega_0 \tau^* \theta}.f_k+\frac{i {\bar g}_{11}}{\omega_0 \tau^*} \bar q (0)e^{-i\omega_0 \tau^* \theta}.f_k+E_2\Bigg)\\
& = - g_{11} q(0) - \bar{g}_{11} \bar{q} (0)\nonumber\\
&+  \tau^{*}
   \left(
   \begin{array}{c}
  {\displaystyle{-\frac{r}{K}-\frac{\omega Re(\alpha_2)}{D}}} \\
  {\displaystyle{\alpha_2 \bar{\alpha_2}\left(c-\frac{ \omega_1 cos(\omega_0 \tau^*)}{D_1}\right)}}\\
\end{array}
\right).\nonumber
\end{align}

Meanwhile noting the following equalities\\
\begin{align}\label{eq:4.37}
\tau^*d\Delta(q(0).f_k)+L(\tau^*)(q(0).f_k)=i\omega_0 \tau^*(q(0).f_k),
\end{align}
\begin{align}\label{eq:4.38}
\tau^*d\Delta(\bar q(0).f_k)+L(\tau^*)(\bar q(0).f_k)=-i\omega_0 \tau^*(\bar q(0).f_k),
\end{align}
and connecting Eqs.\eqref{eq:4.35} to \eqref{eq:4.38}, we can obtain the following equalities\\

\begin{align}\label{eq:4.39}
&2i\omega_0\tau^*E_1-\tau^*d\Delta E_1-L_*(E_1e^{2i\omega_0 \tau^* \theta})\nonumber\\
&=2\tau^{*}
    \left(
    \begin{array}{c}
  {\displaystyle{-\frac{r}{K}-\frac{\omega \alpha_2}{D}}} \\
  {\displaystyle{\alpha_2^2\left(c-\frac{ \omega_1 e^{-i\omega_0 \tau^*}}{D_1}\right)}}\\
\end{array}
\right),\nonumber\\
&-\tau^*d\Delta E_1-L_*(E_2)
\end{align}

\begin{align}\label{eq:4.40}
=\tau^{*}
   \left(
   \begin{array}{c}
  {\displaystyle{-\frac{r}{K}-\frac{\omega Re(\alpha_2)}{D}}} \\
  {\displaystyle{\alpha_2 \bar{\alpha_2}\left(c-\frac{ \omega_1 cos(\omega_0 \tau^*)}{D_1}\right)}}\\
\end{array}
\right)
\end{align}
Thus from \eqref{eq:4.39} and \eqref{eq:4.40}, we can get the values of $E_1$ and $E_2$\\

\begin{align}\label{eq:4.41}
&\left(
\begin{array}{cc}
  {\displaystyle{2i\omega_0-a_{11}-d_1k^2}} & {\displaystyle{-a_{12}}}\\ \\
  {\displaystyle{-b_{21} e^{-i\omega_0 \tau^*}}} & {\displaystyle{2i\omega_0-cY^*-d_2k^2-b_{22}e^{-i\omega_0 \tau^*}}}\\
  \end{array}
  \right)E_1\nonumber\\
  &= 2\left(
     \begin{array}{c}
  {\displaystyle{-\frac{r}{K}-\frac{\omega \alpha_2}{D}}} \\
  {\displaystyle{\alpha_2^2\left(c-\frac{ \omega_1 e^{-i\omega_0 \tau^*}}{D_1}\right)}}\\
\end{array}
\right),
\end{align}
\\
\begin{align}\label{eq:4.42}
&\left(
   \begin{array}{cc}
 {\displaystyle{-a_{11}-d_1k^2}} & {\displaystyle{-a_{12}}}\\
     {\displaystyle{-b_{21} e^{-i\omega_0 \tau^*}}} & {\displaystyle{-cY^*-d_2k^2-b_2 e^{-i\omega_0 \tau^*}}}\\
 \end{array}
 \right)E_{2} \nonumber\\
& = \left(
     \begin{array}{c}
   {\displaystyle{-\frac{r}{K}-\frac{\omega Re(\alpha_2)}{D}}} \\
  {\displaystyle{\alpha_2 \bar{\alpha_2}\left(c-\frac{ \omega_1 cos(\omega_0 \tau^*)}{D_1}\right)}}\\
\end{array}
\right).
\end{align}
\\
Thus, we can determine $W_{20} (\theta)$ and $W_{11} (\theta)$. Furthermore, we get the value of $g_{21}$. According to the theorem developed by Wu \cite{WU96}, we can compute the following values:

\begin{align}\label{eq:43}
&c_{1} (0) = \frac{i}{2 \omega_0 \tau^{*}} (g_{20} g_{11} - 2 |g_{11}|^2 - \frac{|g_{02}|^2}{3}) + \frac{g_{21}}{2}, \nonumber\\
&\mu_{2} = - \frac{{\mbox Re} \{c_{1} (0)\}}{{\mbox Re} \{\lambda^{'} (\tau^{*})\}}, \nonumber\\
&\beta_{2} = 2 {\mbox Re} \{c_{1} (0)\}, \\
&T_{2} = - \frac{{\mbox Im} \{c_{1} (0)\} + \mu_{2} {\mbox Im}\{\lambda^{'} (\tau^{*})\}}{\omega_0\tau^{*}}, \nonumber
\end{align}

which determine the properties of bifurcating periodic solution in
the center manifold at the critical value $\tau^{*}$.

\begin{theorem}
\label{thm:hop}
For the expressions given in \eqref{eq:43}, the following results hold:
\begin{itemize}
\item The sign of $\mu_{2}$~ determines the direction of the
Hopf bifurcation. If $\mu_{2} > 0$, then the Hopf bifurcation is
supercritical and the bifurcating periodic solutions exist for
$\tau > \tau^{*}$. If $\mu_{2} < 0$, then the Hopf bifurcation
is subcritical and the bifurcating periodic solutions exist for
$\tau < \tau^{*}$,
\item The parameter $\beta_{2}$~ determines the stability of the
bifurcating periodic solutions. The bifurcating periodic solutions
are stable if $\beta_{2} < 0$ and unstable if $\beta_{2} > 0$,
\item  Also, $T_{2}$~ determines the period of the bifurcating periodic
solutions. The period of the bifurcating periodic
solutions increases if $T_{2} > 0$ and decreases if
$T_{2} < 0$.
\end{itemize}
\end{theorem}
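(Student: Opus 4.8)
The plan is to recognise Theorem \ref{thm:hop} as the specialisation of the classical Hopf bifurcation theorem of Hassard, Kazarinoff and Wan \cite{BD81} to the delayed diffusive system \eqref{eq:3.1}-\eqref{eq:3.1a}, with the first Lyapunov coefficient $c_1(0)$ furnished by the explicitly computed quantities in \eqref{eq:43}. Essentially all of the analytic work has already been carried out above: the centre manifold reduction collapses the infinite-dimensional delay dynamics near the critical value onto the two-dimensional centre eigenspace, where the flow is governed by the scalar complex equation \eqref{eq:4.15}, and the normal form coefficients $g_{20},g_{11},g_{02},g_{21}$ have been determined (the last requiring the quadratic centre-manifold corrections $W_{20}(\theta),W_{11}(\theta)$ obtained from the vectors $E_1,E_2$ solving the linear systems \eqref{eq:4.41}-\eqref{eq:4.42}).

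The heart of the argument is the Poincar\'e normal form reduction. First I would apply a near-identity change of coordinates transforming \eqref{eq:4.15} into
\[
\dot w=\left(i\omega_0\tau^*+c_1(0)|w|^2\right)w+O(|w|^5),
\]
and then pass to polar coordinates $w=\rho e^{i\varphi}$, so that the radial equation reads $\dot\rho=\mathrm{Re}\,c_1(0)\,\rho^3+O(\rho^5)$. The sign of $\mathrm{Re}\,c_1(0)$ therefore decides whether the equilibrium is a stable or an unstable weak focus at $\tau=\tau^*$, which, combined with the direction in which the eigenvalue pair crosses the imaginary axis, fixes both the side of $\tau^*$ on which the periodic branch appears and its orbital stability.

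Next I would invoke the transversality condition already established in Section \ref{sec:3}, namely $\mathrm{Re}\,\lambda'(\tau^*)>0$ whenever $2B_0(k)>A_1(k)B_1(k)$, so that the eigenvalues cross from left to right. The three displayed quantities are then exactly the standard invariants of \cite{BD81}: because the denominator $\mathrm{Re}\,\lambda'(\tau^*)$ is positive, $\mu_2=-\mathrm{Re}\,c_1(0)/\mathrm{Re}\,\lambda'(\tau^*)$ carries the sign opposite to $\mathrm{Re}\,c_1(0)$, and $\mu_2>0$ forces the bifurcating orbits to exist for $\tau>\tau^*$ (supercritical); $\beta_2=2\,\mathrm{Re}\,c_1(0)$ is the leading Floquet exponent of the periodic orbit, so $\beta_2<0$ yields orbital asymptotic stability; and $T_2$ is the $O(\rho^2)$ correction to the base period $2\pi/(\omega_0\tau^*)$. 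Reading off the signs gives the three bullet points verbatim.

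The only laborious step is the explicit evaluation of $g_{21}$, which is what couples the problem to the centre-manifold corrections $W_{20},W_{11}$ and hence to $E_1,E_2$; but this is pure bookkeeping and has already been dispatched. Conceptually no new difficulty arises, so I expect the sole genuine hypotheses to verify are the transversality (nondegeneracy of the crossing), which Section \ref{sec:3} supplies, together with the simplicity of the purely imaginary eigenvalue pair $\pm i\omega_0\tau^*$, guaranteed by the uniqueness of the positive root $\omega_0$ of \eqref{eq:3.5}.
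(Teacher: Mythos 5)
Your proposal is correct and takes essentially the same route as the paper: the paper gives no separate proof of Theorem \ref{thm:hop}, but treats it as the standard conclusion of the Hassard--Kazarinoff--Wan and Wu theory once the center-manifold reduction and normal-form coefficients $g_{20}, g_{11}, g_{02}, g_{21}$, and hence $c_1(0)$, $\mu_2$, $\beta_2$, $T_2$ in \eqref{eq:43}, have been computed, exactly as you describe. Your explicit polar-coordinate normal-form sketch and the appeal to the transversality condition $2B_0(k)>A_1(k)B_1(k)$ from the preceding section merely unpack the content of that cited classical theorem.
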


By applying these results, we can get the properties of the Hopf-bifurcation. For the parametr set considered in Section 4.1, we have calculated these parameters and observed that $\mu_2<0$ which imply that the Hopf bifurcation will be subcritical and the bifurcating periodic solutions exist for $\tau < \tau^{*}$. Also we get $\beta_2>0$ and $T_2>0$, i.e., bifurcating periodic solutions are unstable with increasing period.

\section{Discussion and Conclusion}
In the current note we show that the model system \eqref{eq:1d} proposed in \cite{RK15}, as well as its ``0 time lag" counterpart (the $\tau=0$, or no delay case) both blow-up in finite time, for sufficiently large initial data. Thus, there is no global stability for any parameter range  for this model system. For there to exist a global in time solution, one must restrict initial data to being close to equilibrium. It would be an interesting question to investigate how close this needs to be. There is probably a delicate interplay here between the parameters in the problem, and the size of this invariant set. Recently, Joglekar et al. (\cite{JYO14}, \cite{JYO15}) discuss this issue by defining $ " \epsilon$-uncertaintity" in case of dynamical system and explained that how a small perturbation in parameter value changes the asymptotic behavior of the system. This is still an interesting open question. Also, for the delayed problem, one does not need $cY^2$, for blow-up to occur. If we choose $cY^m$, $1 <m<2$, blow-up can \emph{still} occur. This is shown in Figure \ref{fig:1t1}. This makes the delayed problem, completely different from the non delayed one, because in system  \eqref{eq:1} if the predator is modeled as $cY^m$, $1 <m<2$, finite time blow-up \emph{cannot} occur. Future investigations might include stochastic effects, or the coupled effects of diffusion and time delay.\\



\end{document}